\newtheorem{theoremcounter}{Theorem Counter}[section]
\theoremstyle{definition}
\newtheorem{ex}[theoremcounter]{Example}
\theoremstyle{plain}
\newtheorem{lem}[theoremcounter]{Lemma}
\newtheorem{prop}[theoremcounter]{Proposition}
\newtheorem{thm}[theoremcounter]{Theorem}
\numberwithin{equation}{section}
\newcommand{\ubreve}[1]{\underaccent{\breve}{#1}}
\newcommand{\mbbz}{\mathbb{Z}}
\theoremstyle{plain}
\newtheorem*{thmA}{Theorem A}
\newtheorem*{thmB}{Theorem B}
\begin{document}

   \title{\Large \bf Arithmetic Dijkgraaf--Witten invariants for real quadratic fields, quadratic residue graphs, and density formulas}
    \author{Yuqi Deng, Riku Kurimaru, and Toshiki Matsusaka}


    

\date{}    
\maketitle
  
\begin{abstract}

We compute Hirano's formula for the mod 2 arithmetic Dijkgraaf--Witten invariant ${Z}_k$ for the ring of integers of the quadratic field $k=\mathbb{Q}(\sqrt{p_1\cdots p_r})$, where ${p_i}$'s are distinct prime numbers with $p_i \equiv 1 \pmod{4}$, and give a simple formula for $Z_k$ in terms of the graph obtained from quadratic residues among $p_1,\cdots, p_r$. Our result answers the question posed by Ken Ono. We also give a density formula for mod 2 arithmetic Dijkgraaf--Witten invariants.
\end{abstract}

\vspace{\baselineskip}
\begin{center}
\title\textbf{Introduction}
\end{center}

In \cite{M}, Minhyong Kim initiated to study arithmetic Chern--Simons theory for number rings, based on Dijkgraaf--Witten theory for 3-manifolds (\cite{Dijkgraaf}) and the analogies between number rings and 3-manifolds, primes and knots in arithmetic topology (\cite{Morishita}). Kim's theory is concerned with totally imaginary number fields, since it employs some results on \'{e}tale cohomology groups of the integer rings of totally imaginary number fields (\cite{Mazur}). Later Kim's construction was extended for any number field which may have real primes by Hirano~\cite{hirano} and by Lee--Park~\cite{Lee}, and Hirano~\cite{hirano} introduced the mod $n$ arithmetic Dijkgraaf--Witten invariant for any number ring containing a primitive $n$-th root of unity. As an interesting example, Hirano computed the mod 2 arithmetic Dijkgraaf--Witten invariant $Z_k$ for the real quadratic field $k=\mathbb{Q}(\sqrt{p_1\cdots p_r})$, where ${p_i}$'s are distinct prime numbers with $p_i \equiv 1 \pmod{4}$, and showed the following formula expressing $Z_k$ in terms of the quadratic residue symbols among ${p_i}$'s:
\begin{align}\label{01}
	Z_k = \frac{1}{2} \sum_{\rho \in \mathrm{Hom} (T,\mathbb{Z}/2\mathbb{Z})} \left(\prod_{i<j}\left(\frac{p_i}{p_j} \right)^{\rho(b_{ij})} \right),
\end{align}
where $T := \{(x_1,\cdots,x_r) \in (\mathbb{Z}/2\mathbb{Z})^r \mid \sum_{i=1}^{r} x_i = 0\}$, $b_{ij} = (0, \cdots, 0, \overset{\ubreve{i}}{1}, 0, \cdots, 0, \overset{\ubreve{j}}{1}, 0, \cdots, 0)$, and $\mathrm{Hom}(T,\mathbb{Z}/2\mathbb{Z})$ is the set of homomorphisms $T\rightarrow{\mathbb{Z}/2\mathbb{Z}}$.

At the conference of ``Low dimensional topology and number theory XI" held at Osaka University in March of 2019, Ken Ono asked us if the right-hand side of Hirano's formula \eqref{01} could be simplified and suggested computing numerically many examples for ${p_i}$'s in order to find such a simple formula.

In this paper, we answer Ono's question. In fact, after many numerical computer calculations, we found and proved the following simple formula for $Z_k$ using a certain graph $G(S)$, called the quadratic residue graph associated with the set $S = \{p_1, \dots, p_r\}$, which is determined by the quadratic residue symbols $\left(\frac{p_i}{p_j}\right)$, (see \cref{s3} for the definition of $G(S)$).

\begin{thmA} [Theorem 3.2 below]
	We have
	\begin{align}
		Z_k = \begin{cases}
			2^{r-2}&\text{if any connected component of $G(S)$ is a circuit,}\\
			0&\text{if otherwise.}
		\end{cases}
	\end{align}
\end{thmA}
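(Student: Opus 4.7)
The plan is to start from Hirano's formula~\eqref{01} and reduce the sum over $\mathrm{Hom}(T,\mathbb{Z}/2\mathbb{Z})$ to a Fourier-type product sum over $(\mathbb{Z}/2\mathbb{Z})^r$. First I would parametrize the homomorphisms: since $T$ equals the kernel of the sum map $(\mathbb{Z}/2\mathbb{Z})^r \to \mathbb{Z}/2\mathbb{Z}$ and the one-dimensional quotient is generated by the diagonal functional $x\mapsto\sum_i x_i$, restriction to $T$ of linear functionals on $(\mathbb{Z}/2\mathbb{Z})^r$ yields a surjection
$$(\mathbb{Z}/2\mathbb{Z})^r \twoheadrightarrow \mathrm{Hom}(T,\mathbb{Z}/2\mathbb{Z}),\qquad \epsilon=(\epsilon_1,\ldots,\epsilon_r)\longmapsto \rho_\epsilon,$$
which is $2$-to-$1$ and satisfies $\rho_\epsilon(b_{ij})=\epsilon_i+\epsilon_j$.

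Second, I would encode the quadratic residue data by setting $c_{ij}\in\{0,1\}$ via $\left(\frac{p_i}{p_j}\right)=(-1)^{c_{ij}}$; by quadratic reciprocity and the hypothesis $p_i\equiv 1\pmod{4}$, we have $c_{ij}=c_{ji}$, so $c_{ij}$ records precisely whether $\{p_i,p_j\}$ is an edge of the quadratic residue graph $G(S)$. Substituting $\rho_\epsilon$ into the exponent gives
$$\sum_{i<j} c_{ij}\rho_\epsilon(b_{ij}) \;=\; \sum_{i<j} c_{ij}(\epsilon_i+\epsilon_j) \;\equiv\; \sum_{i=1}^{r} d_i\,\epsilon_i \pmod{2},$$
where $d_i:=\sum_{j\ne i}c_{ij}$ is the degree of the vertex $p_i$ in $G(S)$. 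Combining the factor of $\tfrac{1}{2}$ in~\eqref{01} with the extra $\tfrac{1}{2}$ coming from the $2$-to-$1$ parametrization yields
$$Z_k \;=\; \frac{1}{4}\sum_{\epsilon\in(\mathbb{Z}/2\mathbb{Z})^r}(-1)^{\sum_i d_i\epsilon_i} \;=\; \frac{1}{4}\prod_{i=1}^{r}\bigl(1+(-1)^{d_i}\bigr).$$

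Third, this last product equals $2^r$ if every $d_i$ is even and vanishes otherwise, so $Z_k=2^{r-2}$ or $0$ according as all vertex degrees in $G(S)$ are even. Finally, by the classical Euler theorem, a connected graph has all vertex degrees even if and only if it is a circuit (admits an Eulerian circuit), so the even-degree condition is equivalent to every connected component of $G(S)$ being a circuit, giving the claimed dichotomy.

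The main difficulty, which is modest in scope, is to set up the two-to-one parametrization of $\mathrm{Hom}(T,\mathbb{Z}/2\mathbb{Z})$ cleanly, verifying that $\rho_\epsilon$ is well-defined on $T$ and that every such homomorphism arises this way, and to align the edge convention of $G(S)$ fixed in~\cref{s3} with the indicator $c_{ij}$ above. Once these bookkeeping points are settled, the argument is a routine character-sum factorization over $(\mathbb{Z}/2\mathbb{Z})^r$ followed by the Euler-circuit translation.
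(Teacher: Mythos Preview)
Your argument is correct and follows essentially the same route as the paper: both reduce Hirano's sum to a character-sum computation whose value is governed by the parities of the vertex degrees of $G(S)$, and then invoke Euler's theorem (\cref{G-even-graph}). The only cosmetic difference is that the paper applies character orthogonality directly on $\mathrm{Hom}(T,\mathbb{Z}/2\mathbb{Z})$, observing that $\varphi(\rho)=(-1)^{\rho(\sum_{E}b_{ij})}$ is the trivial character precisely when $\sum_{E}b_{ij}=\mathbf{0}$ (i.e., when the degree vector vanishes mod $2$), whereas you lift to $(\mathbb{Z}/2\mathbb{Z})^r$ via the $2$-to-$1$ restriction map and factor the sum explicitly as $\prod_i(1+(-1)^{d_i})$; these are two packagings of the same computation.
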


We note that the graph $G(S)$ is an arithmetic analog of the linking diagram in link theory, and the idea to use the graph $G(S)$ was suggested by the analogy between primes and knots in arithmetic topology. In fact, we can show a similar formula for the topological mod 2 Dijkgraaf--Witten invariants for double covers of $S^3$ using the linking diagram of branched knots. 

Let $\mathfrak{G}_r$ be the set of all graphs with the vertex set $\{1,2,\dots, r\}$ and $\mathfrak{C}_r$ be the subset of $\mathfrak{G}_r$ consisting of graphs whose connected components are all circuits.  We compute the density of $\mathfrak{C}_r$ in $\mathfrak{G}_r$ (\cref{C/G-density} below). Let $\mathcal{P}$ denote the set of all prime numbers and $P_r(x) := \{ \{p_1, \cdots, p_r\} \subset \mathcal{P} \cap [1,x] \mid p_i \equiv 1 \pmod{4}, p_i \neq p_j\ (i \neq j) \}$. Then we show the density formula for mod $2$ arithmetic Dijkgraaf--Witten invariants among all real quadratic field $k=\mathbb{Q}(\sqrt{p_1\cdots p_r})$, where $p_1,\cdots,p_r$ are distinct prime numbers with $p_i \equiv 1 \pmod{4}$.

\begin{thmB} [\cref{density-theorem} below]
	We have
	\[
		\lim_{x\to\infty} \frac{\#\left\{\{p_1,\cdots,p_r\}\in P_r(x) \mid Z_k=2^{r-2}\right\}}{\#P_r(x)}=\frac{\#\mathfrak{C}_r}{\#\mathfrak{G}_r}=\frac{1}{2^{r-1}}.
	\]
\end{thmB}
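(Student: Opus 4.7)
The plan is to combine Theorem A with a Chebotarev-type equidistribution of the quadratic residue graph $G(S)$ over $\mathfrak{G}_r$, together with a classical count of $\#\mathfrak{C}_r$.

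By Theorem A, $\{S \in P_r(x) : Z_k = 2^{r-2}\} = \{S \in P_r(x) : G(S) \in \mathfrak{C}_r\}$. Since the condition ``every connected component is a circuit'' is invariant under vertex relabeling, one may pass to ordered $r$-tuples; let $\tilde P_r(x)$ denote the set of ordered tuples $(p_1, \ldots, p_r)$ of distinct primes $\leq x$ with $p_i \equiv 1 \pmod 4$. The key equidistribution claim I would establish is that for every labeled graph $H \in \mathfrak{G}_r$,
\[
\#\{(p_1, \ldots, p_r) \in \tilde P_r(x) : G(p_1, \ldots, p_r) = H\} \sim \frac{\#\tilde P_r(x)}{\#\mathfrak{G}_r} \qquad (x \to \infty),
\]
from which Theorem B follows by summing over $H \in \mathfrak{C}_r$ and using $\#\tilde P_r(x) = r! \cdot \#P_r(x)$.

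To prove this equidistribution I would argue by induction on $r$. Fix $(p_1, \ldots, p_{r-1})$ and consider the Frobenius class of $p_r$ in $\mathrm{Gal}(\mathbb{Q}(i, \sqrt{p_1}, \ldots, \sqrt{p_{r-1}})/\mathbb{Q}) \cong (\mathbb{Z}/2\mathbb{Z})^r$, which records $p_r \bmod 4$ together with $\bigl(\tfrac{p_r}{p_i}\bigr)$ for $i<r$. Quadratic reciprocity (valid because each $p_i \equiv 1 \pmod 4$) ensures that this Frobenius class determines the graph $G(p_1, \ldots, p_r)$ once the inductively fixed subgraph on $\{1, \ldots, r-1\}$ is chosen. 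The Chebotarev density theorem then gives density $1/2^{r-1}$ among primes $\equiv 1 \pmod 4$ for each of the $2^{r-1}$ possible ways of attaching the new vertex $r$, and combining with the inductive hypothesis yields the claimed uniform density $1/\#\mathfrak{G}_r$.

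Finally, the combinatorial identity $\#\mathfrak{C}_r/\#\mathfrak{G}_r = 2^{-(r-1)}$ reduces to $\#\mathfrak{C}_r = 2^{\binom{r-1}{2}}$, the classical count of labeled graphs on $r$ vertices in which every vertex has even degree (equivalently, each connected component admits an Euler circuit): one chooses the $\binom{r-1}{2}$ edges on $\{1, \ldots, r-1\}$ arbitrarily, and the edges incident to the vertex $r$ are then uniquely forced by the parity requirement. The main obstacle is the uniformity in $(p_1, \ldots, p_{r-1})$ of the inner Chebotarev estimate, whose error term depends on the discriminant of the multi-quadratic extension and hence on the sizes of the $p_i$; I would handle this either by invoking an effective form of Chebotarev's theorem (e.g.\ Lagarias--Odlyzko, conditional on GRH, or an unconditional variant with a weaker error term) or, more concretely, by expressing the Legendre symbols as Dirichlet characters modulo $4p_1 \cdots p_{r-1}$ and applying Siegel--Walfisz-type bounds, extracting the main term of $\#P_r(x)$ in parallel.
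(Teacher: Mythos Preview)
Your reduction via Theorem~A and the count $\#\mathfrak{C}_r = 2^{\binom{r-1}{2}}$ are correct and match the paper exactly (this is \cref{C/G-density}, proved by the same parity-forcing argument you give). The difficulty is entirely in the analytic step, and there your proposal has a genuine gap that you yourself flag but do not close.

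In your inductive scheme you fix $(p_1,\dots,p_{r-1})$ with each $p_i\le x$ and count $p_r\le x$ with prescribed values of $\bigl(\tfrac{p_r}{p_i}\bigr)$; this is a prime-counting problem for the modulus $q=4p_1\cdots p_{r-1}$, which can be as large as $x^{r-1}$. Siegel--Walfisz is only valid for $q\le(\log x)^A$, and the unconditional effective Chebotarev theorem of Lagarias--Odlyzko requires $x$ to be exponentially large in terms of $\log d_L$, hence in the $p_i$'s, so neither tool gives anything here. Under GRH the Lagarias--Odlyzko error $O_r(x^{1/2}\log x)$ would suffice after summing over the $O(\pi_{4,1}(x)^{r-1})$ tuples, but that yields only a conditional theorem.

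The paper avoids fixing the inner primes altogether. It writes the indicator of ``$G(S)$ is even'' as $2^{-r}\prod_{j}\bigl(1+\prod_{i\neq j}\bigl(\tfrac{p_i}{p_j}\bigr)\bigr)$, expands, and reduces to bounding bilinear sums of the shape $\sum_{(p_1,\dots,p_r)}\bigl(\tfrac{p_{s+1}\cdots p_r}{p_1\cdots p_s}\bigr)$. These are handled unconditionally by Cauchy--Schwarz together with Heath-Brown's large-sieve inequality for real characters, $\sum^{*}_{m\le M}\bigl|\sum^{*}_{n\le N}a_n\bigl(\tfrac{n}{m}\bigr)\bigr|^2 \ll_\varepsilon (MN)^\varepsilon(M+N)\sum^{*}_{n}|a_n|^2$. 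The point is that averaging over \emph{both} variables simultaneously is what saves the day; your approach, which sums over $p_r$ for each fixed modulus, cannot exploit this bilinear cancellation. If you want to salvage your equidistribution statement unconditionally, you would likewise need a large-sieve input of this type rather than Siegel--Walfisz.
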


The contents of this paper are organized as follows. In \cref{s1}, we recall the mod $n$ arithmetic Dijkgraaf--Witten invariant $Z_k$ for the ring of integers of a number field $k$ containing a primitive $n$-th root of unity, and Hirano's formula for the mod 2 arithmetic Dijkgraaf--Witten invariant $Z_k$ for the real quadratic field $k=\mathbb{Q}(\sqrt{p_1\cdots p_r})$, where ${p_i}$'s are distinct prime numbers with $p_i \equiv 1 \pmod{4}$. In \cref{s2}, we recall some basic notions on graphs. In \cref{s3}, we introduce the quadratic residue graph $G(S)$ associated with the set $S = \{ p_1, \dots , p_r\}$, and state our main theorem (cf.~\cref{maint} below), which is a simple formula expressing $Z_k$, $k = \mathbb{Q}(\sqrt{p_1 \cdots p_r})$, in terms of $G(S)$. In \cref{s4}, we give a proof of the main theorem. In \cref{s5}, we give a topological counterpart of our result for the mod 2 topological Dijkgraaf--Witten invariant for a double cover of the 3-sphere $S^3$ branched over a link. In \cref{s6}, we calculate the density of graphs with $r$ vertices whose connected components are all circuits. In \cref{s7}, we propose a density formula for mod 2 arithmetic Dijkgraaf--Witten invariants, based on the properties of quadratic residue graphs, and we find the density of graphs whose connected components are all circuits and the density of mod 2 arithmetic Dijkgraaf--Witten invariants are equal.


\subsection*{Acknowledgement}

The authors thank Professor Ken Ono for suggesting the question which motivated our study. Y. Deng and R. Kurimaru also express their gratitude to our supervisor, Professor Masanori Morishita, for his valuable advice. Additionally, the authors extend their thanks to Yuta Suzuki for introducing us to \cref{HB}. The third author was supported by JSPS KAKENHI Grant Numbers JP20K14292 and JP21K18141.

\section{Hirano's formulas for the mod 2 arithmetic Dijkgraaf--Witten invariants} \label{s1}

 In this section, we recall the definition of the mod $n$ arithmetic Dijkgraaf--Witten invariant for a number ring, along with Hirano's formula and Ono's question for the mod 2 arithmetic Dijkgraaf--Witten invariant for real quadratic fields $\mathbb{Q}(\sqrt{p_1\cdots p_r})$, $p_i \equiv 1 \pmod{4}$. 
 
Let $k$ be a number field  of finite degree over $\mathbb{Q}$. We fix a primitive $n$-th root of unity $\zeta_n$ and assume that $k$ contains $\zeta_n$. Let $\mathcal{O}_k$ be the ring of integers of $k$ and $S_{k}^{\infty}$ the set of infinite primes of $k$. We set $\overline{X}_k = {\rm Spec}(\mathcal{O}_k)\sqcup{S_{k}^{\infty}}$. 
Let $\pi_1(\overline{X}_k)$ be the modified \'{e}tale fundamental group of $\overline{X}_k$ defined by considering the Artin--Verdier topology over $\overline{X}_k$, which takes the real primes of $k$ into account (cf.~\cite[\S 2.1]{hirano}). It is the Galois group of the maximal extension over $k$ unramified at all finite and infinite primes.

 Let $A$ be a finite group with discrete topology, and $c$ be a fixed cohomology class $c \in {H^3}(A,\mbbz/n\mbbz)$. Let $\mathrm{Hom}_c(\pi_1(\overline{X}_k),A)$ be the set of continuous homomorphisms from $\pi_1(\overline{X}_k)$ to $A$. For $\rho \in \mathrm{Hom}_c(\pi_1(\overline{X}_k),A)$, we define the mod $n$ \emph{arithmetic Chern--Simons invariant} $CS_c(\rho)$ by the image of $c$ under the composition
\[
	H^3(A,\mbbz/n\mbbz) \overset{\rho^*}{\to} {H^3(\pi_1(\overline{X}_k),\mbbz/n\mbbz)} \overset{j}{\to} {H^3(\overline{X}_k,\mbbz/n\mbbz)} \cong {\mbbz/n\mbbz},
\]
where the cohomology group of $\overline{X}_k$ is the modified \'{e}tale cohomology group defined in the Artin--Verdier topology, and $j$ is the edge homomorphisms in the modified Hochschild--Serre spectral sequence 
\[
	H^p(\pi_1(\overline{X}_k),H^q(\widetilde{{\overline{X}}}_k,\mbbz/n\mbbz))\Rightarrow{H^{p+q}(\overline{X}_k,\mbbz/n\mbbz)},
\]
where $\widetilde{{\overline{X}}}_k = \underleftarrow{\lim}Y_i$, $Y_i$ running over a finite Galois covering of $\overline{X}_k$ (cf.~\cite[\S 2.2]{hirano}). Note that the isomorphism $H^3(\overline{X}_k,\mbbz/n\mbbz)\cong{\mbbz/n\mbbz}$ depends on the choice of $\zeta_n$.

We then define the mod $n$ {\it arithmetic Dijkgraaf--Witten invariant} $Z_c(\overline{X}_k)$ of $\overline{X}_k$ by
\[
	Z_c(\overline{X}_k) := \frac{1}{\# A}\sum_{\rho \in \mathrm{Hom}_c(\pi_1(\overline{X}_k),A)} {{\zeta_n}^{CS_c(\rho)}}.
\]
This definition differs from~\cite{hirano} by one factor $1/\# A$.
In the following,  we write simply $Z_k$ for $Z_c(\overline{X}_k)$. 

Now let $k$ be the quadratic field $\mathbb{Q}(\sqrt{p_1\cdots p_r})$, where ${p_i}$'s are distinct prime numbers with $p_i \equiv 1 \pmod{4}$. We consider the case that $n=2$ and $A=\mbbz/2\mbbz$. Let $c$ be the unique non-trivial class in $H^3(A,\mbbz/2\mbbz)$. We set $T:= \{(x_1,\cdots,x_r) \in {(\mathbb{Z}/2\mathbb{Z})^r \mid \sum_{i=1}^{r} x_i = 0}\}$ and  $b_{ij} :=(0, \cdots, 0, \overset{\ubreve{i}}{1}, 0, \cdots, 0, \overset{\ubreve{j}}{1}, 0, \cdots, 0) \in T$. Let $\mathrm{Hom}(T,\mathbb{Z}/2\mathbb{Z})$ be the abelian group of homomorphisms from $T$ to $\mathbb{Z}/2\mathbb{Z}$. Then Hirano showed the following formula for the mod $2$ Dijkgraaf--Witten invariants $Z_k$.

\begin{thm}[{\cite[Corollary 4.2.4]{hirano}}]\label{Hirano-formula}
	We have
	\[
		Z_k=\frac{1}{2}\sum_{\rho\in{{\rm Hom}(T,\mathbb{Z}/2\mathbb{Z})}}{\left(\prod_{i<j}{\left({\frac{p_i}{p_j}}\right)}^{\rho(b_{ij})}\right)}.
	\]
\end{thm}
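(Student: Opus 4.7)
The plan is to follow the general recipe defining $Z_c(\overline{X}_k)$ and make three pieces explicit for this specific $k$: the parameter set $\mathrm{Hom}_c(\pi_1(\overline{X}_k),\mathbb{Z}/2\mathbb{Z})$, the cocycle $c\in H^3(\mathbb{Z}/2\mathbb{Z},\mathbb{Z}/2\mathbb{Z})$, and the Chern--Simons invariant $CS_c(\rho)$. Substituting $\zeta_2=-1$ into the defining sum, the claim becomes the pair of statements
\[
\mathrm{Hom}(\pi_1(\overline{X}_k),\mathbb{Z}/2\mathbb{Z})\;\cong\;\mathrm{Hom}(T,\mathbb{Z}/2\mathbb{Z}),\qquad
(-1)^{CS_c(\rho)}\;=\;\prod_{i<j}\left(\frac{p_i}{p_j}\right)^{\rho(b_{ij})},
\]
so the whole argument is organized around proving these two identifications.

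First, I would describe the parameter set via class field theory and genus theory. A continuous homomorphism $\pi_1(\overline{X}_k)\to\mathbb{Z}/2\mathbb{Z}$ corresponds to an unramified quadratic extension of $k$, unramified also at the real places. Since each $p_i\equiv 1\pmod 4$, every $\mathbb{Q}(\sqrt{p_i})$ is real and unramified outside $p_i$, so the compositum $k(\sqrt{p_{i_1}\cdots p_{i_s}})/k$ is unramified everywhere, and the relation $p_1\cdots p_r\in(k^{\times})^2$ kills the full set. Standard genus theory for $k=\mathbb{Q}(\sqrt{p_1\cdots p_r})$ shows that this accounts for all such characters, giving a group isomorphism with the dual $(\mathbb{Z}/2\mathbb{Z})^r/\langle(1,\dots,1)\rangle\cong\mathrm{Hom}(T,\mathbb{Z}/2\mathbb{Z})$ of size $2^{r-1}$, under which a character corresponding to a subset $I\subset\{1,\dots,r\}$ pairs with $b_{ij}$ to give $1$ exactly when $|I\cap\{i,j\}|$ is odd.

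Second, the generator of $H^3(\mathbb{Z}/2\mathbb{Z},\mathbb{Z}/2\mathbb{Z})=\mathbb{F}_2[x]_3$ is $c=x^3$, so $\rho^\ast c=\rho\cup\rho\cup\rho$. The central step is to evaluate its image under the edge map $j\colon H^3(\pi_1(\overline{X}_k),\mathbb{Z}/2\mathbb{Z})\to H^3(\overline{X}_k,\mathbb{Z}/2\mathbb{Z})\cong\mathbb{Z}/2\mathbb{Z}$. By the Artin--Verdier duality pairing with $H^0(\overline{X}_k,\mu_2)$ and the reciprocity law, this global invariant is a sum of local Hilbert-symbol contributions over the ramified primes $p_1,\dots,p_r$. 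Expanding $\rho$ as $\sum_i\rho(b_{ij})(\cdot)$ in the parameterization from the previous step, bilinearity of the cup product, and Morishita's arithmetic topology dictionary $(-1)^{\ell(p_i,p_j)}=\left(\frac{p_i}{p_j}\right)$ for primes $\equiv 1\pmod 4$, collapses the local sum to $\sum_{i<j}\rho(b_{ij})\cdot\left(\frac{p_i}{p_j}\right)_{\mathbb F_2}$, where $(-1)^{(\cdot)_{\mathbb F_2}}$ equals the Legendre symbol. This yields the claimed product formula for $(-1)^{CS_c(\rho)}$.

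The hard part is clearly the last step: identifying the étale triple cup product with a symmetric sum of quadratic residue contributions. It requires a careful Hochschild--Serre and duality computation, working with the modified complex taking the real primes into account, combined with a precise version of the arithmetic linking number. Everything else--the description of the Hom set by genus theory, the identification $c=x^3$, and the bilinear bookkeeping--is formal once that core step is in place. Summing over all $2^{r-1}$ characters and dividing by $\#A=2$ yields Hirano's formula.
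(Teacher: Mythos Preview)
The paper does not contain a proof of this theorem; it is quoted as \cite[Corollary~4.2.4]{hirano} and serves as the \emph{input} to the paper's own results (Theorems~A and~B). So there is no argument here to compare your proposal against.

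As for your sketch itself: the first step (identifying $\mathrm{Hom}_c(\pi_1(\overline{X}_k),\mathbb{Z}/2\mathbb{Z})$ with $\mathrm{Hom}(T,\mathbb{Z}/2\mathbb{Z})$ via genus theory for $k=\mathbb{Q}(\sqrt{p_1\cdots p_r})$, $p_i\equiv 1\pmod 4$) is correct and standard. The second step is where you should be more careful. You write that ``bilinearity of the cup product'' collapses $\rho^\ast c=\rho\cup\rho\cup\rho$ to a sum over pairs $i<j$, but the triple cup product is \emph{trilinear} in $\rho$, not bilinear; expanding naively would give cubic terms in the $\rho(b_{ij})$, not the linear expression $\sum_{i<j}\rho(b_{ij})\,\mathrm{lk}(p_i,p_j)$ that actually appears. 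The reduction to pairwise Legendre symbols in Hirano's argument is not a formal bilinearity step: it goes through an explicit choice of cochains and a local decomposition of the global invariant at the ramified primes $p_1,\dots,p_r$, where each local term is then matched with a Hilbert symbol. You rightly flag this as the hard part, but the heuristic you offer for why it should come out quadratic is not the correct mechanism. If you want to fill this in, the honest route is to follow Hirano's local--global computation (or the analogous topological computation in \cite[\S5]{hirano}) rather than appeal to bilinearity.
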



Ken Ono asked us the question, ``Can we simplify the right-hand side of the formula in \cref{Hirano-formula}", and he suggested computing numerically the right-hand side for many examples of $p_i$'s.

\section{Preliminaries on graphs} \label{s2}

In this section, we recall some basic notions on graphs, which will be used in the subsequent sections.

A \emph{graph} $G$ consists of two sets $V = V(G)$ and $E =E(G)$, where $V$ is the set of \emph{vertices} and $E$ is the set of \emph{edges}. The graph is denoted by $G=(V,E)$. 
The set $E$ is regarded as a subset of the power set $\mathfrak{P}(V)$ consisting of $2$-sets, $E \subset \{e \in \mathfrak{P}(V) \mid \# e = 2\}$.
We denote by $e_{ij}$ ($= \{v_i, v_j\}$) the edge joining the vertices $v_i$ and $v_j$. Here $e_{ii}$ is not considered as an edge.

We say that $G'=(V',E')$ is a \emph{subgraph} of $G=(V,E)$ if $V'\subset{V}$, $E'\subset{E}$, denoted by $G'\subset{G}$.
Two graphs $G$ and $H$ are \emph{isomorphic} if there is a bijection $f$ between the vertex sets $V(G)$ and $V(H)$ such that any two vertices $u$ and $v$ of $G$ are adjacent in $G$ if and only if $f(u)$ and $f(v)$ are adjacent in $H$.

The \emph{degree} of a vertex $v\in{V}$, denoted by $\deg(v)$, is the number of vertices which are adjacent to $v$. We say that $v$ is an \emph{even} (resp.~\emph{odd}) vertex if $\deg(v)$ is even (resp.~odd), ($\deg(v)=0$ included). We call a graph $G$ an even (resp.~odd) graph if all $v\in{V}$ are even (resp.~odd). 

A  \emph{path} is a graph $P=(V,E)$ of the form
\[
	V=\{v_0,v_1,\cdots,v_l\}, E=\{e_{01}, e_{12},e_{23},\cdots,e_{l-1,l}\},
\]
where vertices $v_0,v_1,\dots, v_l$ are distinct each other. This path $P$ is denoted by $v_0v_1\cdots{v_l}$.
A graph is \emph{connected} if there exists a path between any two vertices in the graph. A graph with only one vertex is regarded as a connected graph. A maximal connected subgraph of a graph $G$ is called a \emph{connected component} of $G$.

A sequence $v_{i_1}e_{i_{1}i_{2}}v_{i_2}\cdots v_{i_j} \cdots v_{i_{r-1}}e_{i_{r-1}i_r}v_{i_r}$ is called a \emph{trail} in a graph, when vertices $v_{i_1}, \dots, v_{i_r}$ and edges $e_{i_1 i_2}, \dots, e_{i_{r-1} i_r}$ appear alternately and every edge appear exactly once, as in \cref{Ex-trails} below. The path is an example of the trail. The first vertex $v_{i_1}$, the last vertex $v_{i_r}$, and the other vertices $v_{i_j}$ of the trail are called the \emph{starting vertex}, the \emph{terminal vertex}, and \emph{passing vertices}, respectively. When the starting vertex and the terminal vertex of a trail coincide, the trail is called a \emph{circuit}. (In some references, it is called an \emph{Euler tour.}) A graph $G$ consisting of a single vertex is considered to be a circuit.

\begin{figure}[H]
 \begin{tikzpicture}[>=Stealth]
   \coordinate (a) at (-2.5,1);
  \coordinate (b) at (0,-0.5);
    \coordinate (c) at (0,1);
        \coordinate (d) at (2.5,-0.5);
  \node  at (a)[anchor=225] {\small{\rm$v_1$}} ;
\node at (b)[anchor=90] {\small{\rm$v_2$}} ;
\node at (c) [anchor=-90]{\small{\rm$v_3$}} ;
\node at (d) [anchor=90]{\small{\rm$v_4$}} ;
   \node  at (-1.5,0.1){\small{\rm$e_{12}$}} ;
     \node  at (0.3,0.1){\small{\rm$e_{23}$}} ;
       \node  at (1.5,0.6){\small{\rm$e_{34}$}} ;
         \fill (a) circle (1.3pt);
                \fill (b) circle (1.3pt);
                       \fill (c) circle (1.3pt);
                       \fill (d) circle (1.3pt);
                \draw [line width=1pt] (a)--(b)--(c)--(d);            
  \begin{scope}[xshift=5.5cm]
    \coordinate (a) at (-2,-0.5);
  \coordinate (b) at (0,-0.5);
    \coordinate (c) at (2.5,-0.5);
       \coordinate (d) at (1.25,1.3);
  \node  at (a)[anchor=-90] {\small{\rm$v_1$}} ;
\node at (b)[anchor=-70] {\small{\rm$v_2$}} ;
\node at (c) [anchor=-180]{\small{\rm$v_3$}} ;
\node at (d) [anchor=-90]{\small{\rm$v_4$}} ;
         \fill (a) circle (1.3pt);
                \fill (b) circle (1.3pt);
                       \fill (c) circle (1.3pt);
                             \fill (d) circle (1.3pt);
                \draw [line width=1pt] (a)--(b)--(c)--(d)--(b);
                   \node  at (-1,-0.7){\small{\rm$e_{12}$}} ;
     \node  at (0.4,0.6){\small{\rm$e_{24}$}} ;
       \node  at (2.1,0.6){\small{\rm$e_{34}$}} ;
              \node  at (1.25,-0.7){\small{\rm$e_{23}$}} ;
              \end{scope} 
       \begin{scope}[xshift=11cm]
    \coordinate (a) at (-0.7,1.5);
  \coordinate (b) at (0.7,1.5);
    \coordinate (c) at (-1.5,-0.5);
        \coordinate (d) at (1.5,-0.5);
  \node  at (a)[anchor=-90] {\small{\rm$v_1$}} ;
\node at (b)[anchor=-90] {\small{\rm$v_2$}} ;
\node at (c) [anchor=0]{\small{\rm$v_3$}} ;
\node at (d) [anchor=-180]{\small{\rm$v_4$}} ;
         \fill (a) circle (1.3pt);
                \fill (b) circle (1.3pt);
                       \fill (c) circle (1.3pt);
                          \fill (d) circle (1.3pt);
                \draw [line width=1pt] (a)--(b)--(c)--(d)--(a);
 \node  at (0,1.7){\small{\rm$e_{12}$}} ;
\node  at (-0.8,0.6){\small{\rm$e_{23}$}} ;
\node  at (0.8,0.6){\small{\rm$e_{14}$}} ;
\node  at (0,-0.7){\small{\rm$e_{34}$}} ;
\end{scope}          
 \end{tikzpicture}
 \caption{Examples for trails. The right one is a circuit.}
 \label{Ex-trails}
 \end{figure}
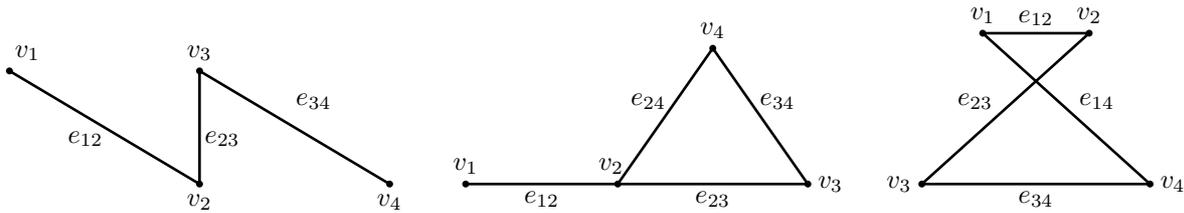
 
The following is Euler's famous result concerning the classification of graphs that we will use later (cf.~\cite[Chapter I, Theorem 12]{boll}).
 
\begin{thm}[Euler 1736]\label{G-even-graph}
	For a graph $G$, the following conditions are equivalent.
	\begin{enumerate}
		\item[$(1)$] Any connected component of $G$ is a circuit.
		\item[$(2)$] $G$ is an even graph.
	\end{enumerate}
\end{thm}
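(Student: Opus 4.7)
The plan is to prove both implications of the equivalence stated in \cref{G-even-graph}. The direction $(1) \Rightarrow (2)$ is straightforward: for any connected component $C$ of $G$, an Euler circuit $v_{i_1} e_{i_1 i_2} v_{i_2} \cdots e_{i_{r-1} i_r} v_{i_r}$ (with $v_{i_1} = v_{i_r}$) uses every edge of $C$ exactly once, so each vertex $v \in V(C)$ has $\deg(v)$ equal to twice the number of times $v$ appears as an interior vertex of the circuit (each appearance consumes one arriving and one departing edge, and at the starting/terminal vertex the initial departure pairs with the final arrival). An isolated vertex, which the paper regards as a circuit on its own, trivially has degree $0$.

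For the harder direction $(2) \Rightarrow (1)$, I plan to argue component by component. Fix a connected component $C$ and assume every vertex of $C$ has even degree. If $C$ consists of a single vertex, we are done. Otherwise, I will construct an Euler circuit in $C$ by the classical \emph{build-and-splice} procedure. Starting from any non-isolated vertex $v_0$, traverse unused edges arbitrarily; whenever the walk enters a vertex $u \neq v_0$, it has consumed an odd number of edges incident to $u$, so by the even-degree hypothesis at least one unused edge remains at $u$ and the walk can be extended. Hence the walk can terminate only upon returning to $v_0$, producing a closed trail $T_1$.

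If $T_1$ uses every edge of $C$, it is the desired circuit. Otherwise, by connectedness of $C$ some vertex $w$ lying on $T_1$ is incident to an edge of $C$ not used in $T_1$. The key observation is that deleting the edges of $T_1$ from $C$ preserves the parity of every vertex degree, since each vertex on $T_1$ loses an even number of incident edges. Repeating the greedy construction from $w$ within this smaller graph produces another closed trail $T_2$, edge-disjoint from $T_1$ and sharing the vertex $w$. Splicing $T_2$ into $T_1$ at $w$ (i.e., inserting $T_2$ at one occurrence of $w$ along $T_1$) yields a longer closed trail, and iterating this process, which must terminate since $E(C)$ is finite, eventually produces an Euler circuit covering all of $E(C)$.

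The main obstacle I expect is formalizing the splicing step so that the resulting sequence truly satisfies the paper's definition of a trail: alternation of vertices and edges, each edge of $C$ appearing exactly once, and equality of starting and terminal vertices. This is essentially bookkeeping once $T_1$ and $T_2$ are shown to be edge-disjoint closed trails meeting at $w$, but some care is needed to write the concatenation explicitly and to confirm that the iteration reduces the number of remaining edges at each step.
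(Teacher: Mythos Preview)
Your argument is correct and is essentially the classical Hierholzer construction for Euler tours. Note, however, that the paper does not supply its own proof of \cref{G-even-graph}; it simply quotes the result with a reference to \cite[Chapter~I, Theorem~12]{boll}. So there is no ``paper's proof'' to compare against beyond that citation, and your write-up would in fact be more self-contained than what the paper provides.
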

 




\section{Main theorem} \label{s3}

In this section, we introduce the quadratic residue graph associated to a finite set of prime numbers $p_1,\cdots, p_r \equiv 1 \pmod{4}$, 
and we present the main theorem in this paper, which computes Hirano's formula explicitly.

Let $S=\{p_1,p_2,\cdots,p_r\}$ be a finite set of distinct prime numbers, where $p_i \equiv{1} \pmod{4}$ $ (1\leq{i}\leq{r})$. We define the \emph{quadratic residue graph} $G(S)$ associated to $S$ by 
\begin{itemize}
	\item $V = V(G(S)) = \{p_1, \dots, p_r\}$.
	\item $E = E(G(S)) = \{e = \{p_i, p_j\} \in \mathfrak{P}(V) \mid \left(\frac{p_i}{p_j}\right) = -1\}$.
\end{itemize}

We can also illustrate the graph $G(S)$ as follows. We set primes $p_1,p_2, \cdots, p_r$ in order so that $p_1<p_2<\cdots<p_r$. Then, we put the vertices of $G(S)$ evenly on a unit circle counterclockwise starting at the point $(1, 0)$. 
Namely, the vertex  $(\cos\frac{2\pi(i-1)}{r},\sin\frac{2\pi(i-1)}{r})$ corresponds to the prime $p_i$, and we  denote the vertices by ${p_i}$'s. Then, two vertices $p_i$ and $p_j$ are linked (adjacent) if and only if $\left(\frac{p_i}{p_j}\right) = -1$ $(i \neq j)$. Since $p_i \equiv{1} \pmod{4}$, the graph $G(S)$ is uniquely well-defined by the quadratic reciprocity.

\begin{ex}
\begin{enumerate}[\rm (1)]
\item $r=3$, $S=\{5,13,37\}$.
\begin{center}
 \begin{tikzpicture}[>=Stealth]
   \coordinate (a) at (1.5,0);
  \coordinate (b) at (-0.75,1.299);
    \coordinate (c) at (-0.75,-1.299);
  \node  at (a)[anchor=225] {\small{\rm5}} ;
\node at (b)[anchor=-45] {\small{\rm13}} ;
\node at (c) [anchor=45]{\small{\rm37}} ;
    \draw[dashed] (0,0) circle(1.5);
     \draw[dashed,->](0,-2)--(0,2);
          \draw[dashed,->](-2,0)--(2,0);
          
         \fill (a) circle (1.3pt);
                \fill (b) circle (1.3pt);
                       \fill (c) circle (1.3pt);
                \draw [line width=1pt] (a)--(b)--(c)--(a);
 \end{tikzpicture}
 \end{center}
\item $r=4, S=\{5, 13, 37,41\}$.
\begin{center}
 \begin{tikzpicture}[>=Stealth]
 
   \coordinate (a) at (1.5,0);
  \coordinate (b) at (0,1.5);
    \coordinate (c) at (-1.5,0);
  \coordinate (d) at (0,-1.5);

  \node  at (a)[anchor=225] {\small{\rm5}} ;
\node at (b)[anchor=-45] {\small{\rm13}} ;
\node at (c) [anchor=-45]{\small{\rm37}} ;
\node at (d) [anchor=45]{\small{\rm41}} ;

    \draw[dashed] (0,0) circle(1.5);
     \draw[dashed,->](0,-2)--(0,2);
          \draw[dashed,->](-2,0)--(2,0);
          
         \fill (a) circle (1.3pt);
                \fill (b) circle (1.3pt);
                       \fill (c) circle (1.3pt);
                              \fill (d) circle (1.3pt);
                              \draw [line width=1pt] (a)--(b)--(c)--(a);
                                \draw  [line width=1pt](b)--(d);

 \end{tikzpicture}
 \end{center}
 \end{enumerate}
\end{ex}

Many numerical examples are given in \cref{appendix}, from which we find the following main theorem.

\begin{thm}\label{maint} 
	We have
	\begin{align*}
		Z_k &= \frac{1}{2} \sum_{\rho \in \mathrm{Hom}(T,\mathbb{Z}/2\mathbb{Z})} \left(\prod_{i<j}{\left({\frac{p_i}{p_j}}\right)}^{\rho(b_{ij})}\right)\\ 
		&= \begin{cases}
			2^{r-2} &\text{if any connected component of $G(S)$ is a circuit,}\\
			0 &\text{if otherwise.}
		\end{cases}
	\end{align*}
\end{thm}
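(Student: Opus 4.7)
The plan is to convert the Legendre-symbol product into an $\mathbb{F}_2$-linear exponent, unfold the sum over $\mathrm{Hom}(T,\mathbb{Z}/2\mathbb{Z})$ into an unrestricted sum over $(\mathbb{Z}/2\mathbb{Z})^r$, factor it into a product over the vertices, and then read off the answer via \cref{G-even-graph}. Write $a_{ij}\in\mathbb{Z}/2\mathbb{Z}$ for the exponent determined by $\left(\frac{p_i}{p_j}\right)=(-1)^{a_{ij}}$, so that $a_{ij}=1$ exactly when $\{p_i,p_j\}\in E(G(S))$. Then for every $\rho\in\mathrm{Hom}(T,\mathbb{Z}/2\mathbb{Z})$, the product inside Hirano's formula equals $(-1)^{\sum_{i<j} a_{ij}\rho(b_{ij})}$.

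Next I would parametrise $\mathrm{Hom}(T,\mathbb{Z}/2\mathbb{Z})$. Since $T$ is the kernel of the augmentation $\sigma\colon(\mathbb{Z}/2\mathbb{Z})^r\to\mathbb{Z}/2\mathbb{Z}$, any $\rho$ extends to a linear functional on $(\mathbb{Z}/2\mathbb{Z})^r$, and two extensions $\rho(x)=\sum c_i x_i$ and $\sum c_i'x_i$ restrict to the same homomorphism on $T$ if and only if $(c_i-c_i')$ is a constant vector. Evaluating on the generators $b_{ij}$ gives $\rho(b_{ij})=c_i+c_j$, and the exponent $\sum_{i<j}a_{ij}(c_i+c_j)$ is visibly invariant under the translation $c_i\mapsto c_i+1$. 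Hence
\[
\sum_{\rho\in\mathrm{Hom}(T,\mathbb{Z}/2\mathbb{Z})}\!\!(-1)^{\sum_{i<j} a_{ij}\rho(b_{ij})}
=\frac{1}{2}\sum_{(c_1,\dots,c_r)\in(\mathbb{Z}/2\mathbb{Z})^r}(-1)^{\sum_{i<j}a_{ij}(c_i+c_j)}.
\]

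The key computation is that $\sum_{i<j}a_{ij}(c_i+c_j)=\sum_{i=1}^r c_i\deg(p_i)$, where the degree is taken in $G(S)$. The exponential sum therefore factors:
\[
\sum_{c\in(\mathbb{Z}/2\mathbb{Z})^r}(-1)^{\sum_i c_i\deg(p_i)}
=\prod_{i=1}^r\bigl(1+(-1)^{\deg(p_i)}\bigr),
\]
which is $2^r$ if every vertex of $G(S)$ has even degree and $0$ otherwise. Multiplying by the two factors of $1/2$ (one from the definition of $Z_k$, one from the overcount in the extension of $\rho$) gives $Z_k=2^{r-2}$ in the even case and $Z_k=0$ otherwise.

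Finally, \cref{G-even-graph} identifies the condition ``every vertex of $G(S)$ has even degree'' with ``every connected component of $G(S)$ is a circuit,'' completing the proof. There is no real obstacle here; the only point to be careful about is the factor-of-two bookkeeping when passing from a sum over $\mathrm{Hom}(T,\mathbb{Z}/2\mathbb{Z})$ (of size $2^{r-1}$) to a sum over $(\mathbb{Z}/2\mathbb{Z})^r$ (of size $2^r$), which I verified above using the invariance of the exponent under the diagonal translation.
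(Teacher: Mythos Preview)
Your argument is correct. Both you and the paper reduce the question to ``$G(S)$ is an even graph'' and then invoke \cref{G-even-graph}, so the endgame is identical; the difference is in how the character sum is evaluated. The paper packages the summand as a character $\varphi\colon \mathrm{Hom}(T,\mathbb{Z}/2\mathbb{Z})\to\{\pm1\}$, $\varphi(\rho)=\prod_{i<j}(p_i/p_j)^{\rho(b_{ij})}$, applies orthogonality (\cref{orthogonal}) to get $2^{r-1}$ or $0$, and then checks via \cref{bij=0} that $\varphi=\varepsilon$ iff $\sum_{\{p_i,p_j\}\in E}b_{ij}=\mathbf{0}$, i.e.\ iff all degrees are even. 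You instead lift each $\rho$ to a functional $\sum c_ix_i$ on $(\mathbb{Z}/2\mathbb{Z})^r$, use the identity $\sum_{i<j}a_{ij}(c_i+c_j)=\sum_i c_i\deg(p_i)$, and factor the resulting sum as $\prod_i\bigl(1+(-1)^{\deg(p_i)}\bigr)$. Your route is more explicit and avoids quoting character orthogonality as a black box (indeed, your product factorisation \emph{is} the orthogonality relation, applied coordinate by coordinate); the paper's route is slightly more conceptual in that it isolates exactly which element of $T$ must vanish. The factor-of-two bookkeeping you flagged is handled correctly: the invariance under $c_i\mapsto c_i+1$ follows from $\sum_i\deg(p_i)=2\,\#E\equiv 0\pmod 2$.
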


\begin{ex}
	Let $S=\{5, 29, 37,73\}$ so that $\left(\frac{5}{37}\right) = \left(\frac{5}{73}\right) = \left(\frac{29}{37}\right) = \left(\frac{29}{73}\right)  =-1$, $\left(\frac{5}{29}\right) =\left(\frac{37}{73}\right) = 1$. Then the quadratic residue graph $G(S)$ is given by the following figure. Let $k$ := $\mathbb{Q}(\sqrt{5\cdot29\cdot37\cdot73})$ = $\mathbb{Q}(\sqrt{391645})$. 
By \cref{maint}, we have $Z_k = 2^2 = 4$. 
\begin{center}
 \begin{tikzpicture}[>=Stealth]
 
   \coordinate (a) at (1.5,0);
  \coordinate (b) at (0,1.5);
    \coordinate (c) at (-1.5,0);
  \coordinate (d) at (0,-1.5);

  \node  at (a)[anchor=225] {\small{\rm5}} ;
\node at (b)[anchor=-45] {\small{\rm29}} ;
\node at (c) [anchor=-45]{\small{\rm37}} ;
\node at (d) [anchor=45]{\small{\rm73}} ;

    \draw[dashed] (0,0) circle(1.5);
     \draw[dashed,->](0,-2)--(0,2);
          \draw[dashed,->](-2,0)--(2,0);
          
         \fill (a) circle (1.3pt);
                \fill (b) circle (1.3pt);
                       \fill (c) circle (1.3pt);
                              \fill (d) circle (1.3pt);
                              \draw [line width=1pt] (a)--(c)--(b)--(d)--(a);

 \end{tikzpicture}
 \end{center}
\end{ex}

\section{Proof of the main theorem} \label{s4}

We give a proof of \cref{maint} by using the orthogonality of characters of a finite abelian group. 
Throughout this section, we use the notations given in the previous sections.
 
 \begin{lem}\label{bij=0}
 	Let $G(S)$ be the quadratic residue graph associated to $S=\{p_1,\cdots, p_r\}$, $p_i \equiv{1} \pmod{4}$ $(1\leq{i}\leq{r})$. Then, $G(S)$ is an even graph if and only if $\sum_{\{p_i, p_j\} \in E(G(S))} b_{ij} = \mathbf{0}$.
 \end{lem}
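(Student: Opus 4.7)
The plan is to compute the sum $\sum_{\{p_i,p_j\}\in E(G(S))} b_{ij}$ coordinate by coordinate in $(\mathbb{Z}/2\mathbb{Z})^r$. By the definition of $b_{ij}$, its $k$-th coordinate is $1$ precisely when $k\in\{i,j\}$ and $0$ otherwise. Therefore, the $k$-th coordinate of the total sum counts, modulo $2$, the number of edges in $E(G(S))$ that are incident to the vertex $p_k$. This number is exactly $\deg(p_k)$ in $G(S)$.

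Consequently,
\[
	\sum_{\{p_i,p_j\}\in E(G(S))} b_{ij} \;=\; \bigl(\deg(p_1),\deg(p_2),\ldots,\deg(p_r)\bigr) \pmod 2,
\]
which vanishes in $(\mathbb{Z}/2\mathbb{Z})^r$ if and only if $\deg(p_k)$ is even for every $k$, i.e.\ if and only if $G(S)$ is an even graph. This is the desired equivalence. (The containment of the sum in $T$ is automatic, since $\sum_k \deg(p_k)=2\#E(G(S))\equiv 0\pmod 2$ for any graph.)

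There is essentially no obstacle here: the statement is a purely combinatorial incidence identity and the proof reduces to the handshake-type counting above. The only thing to be careful about is to distinguish the ambient group $(\mathbb{Z}/2\mathbb{Z})^r$ from the subgroup $T$; writing out the coordinate-wise computation makes both the equivalence and the well-definedness of the sum in $T$ transparent.
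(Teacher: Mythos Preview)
Your proof is correct and follows essentially the same approach as the paper: both compute the sum $\sum_{\{p_i,p_j\}\in E(G(S))} b_{ij}$ coordinate-wise and identify the $k$-th coordinate with $\deg(p_k)\bmod 2$, from which the equivalence is immediate. Your additional remark that the sum automatically lies in $T$ via the handshake identity is a small bonus not made explicit in the paper.
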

 
\begin{proof}
	By the definition of $b_{ij} \in T$, we easily see that
	\[
		\sum_{\{p_i, p_j\} \in E(G(S))} b_{ij} = \left( \deg(p_i) \bmod{2}\right)_{1 \leq i \leq r}.
	\]
	Therefore, the right-hand side equals $\mathbf{0}$ if and only if $G(S)$ is an even graph.
\end{proof}



Next we recall that the following orthogonality of characters of a finite abelian group, (see, for example, \cite[Section 1.12]{ono}.) 

\begin{lem}\label{orthogonal}
	Let $A$ be a finite abelian group of order $n$. Let $\chi: A \to \mathbb{C}^{\times}$ be a homomorphism (character of $A$). Then we have
\[
	\sum_{a\in{A}}{\chi(a)} = \begin{cases}
		n &\text{if } \chi = \varepsilon,\\
		0&\text{if } \chi \neq \varepsilon,
	\end{cases}
\]
where $\varepsilon$ is the identity character defined by $\varepsilon(a)$ = $1$ for any $a\in{A}$.
\end{lem}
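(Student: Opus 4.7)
The plan is to give the standard two-case argument based on translation invariance of the sum, which is the classical proof of character orthogonality.

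First I would dispose of the trivial case $\chi = \varepsilon$ immediately: since $\varepsilon(a) = 1$ for every $a \in A$ and $\#A = n$, the sum is plainly $n$. This handles one branch of the piecewise formula with no calculation.

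For the nontrivial case $\chi \neq \varepsilon$, set $S := \sum_{a \in A} \chi(a)$. Since $\chi$ is not identically $1$, we can pick $b \in A$ with $\chi(b) \neq 1$. The key observation is that left translation by $b$ is a bijection $A \to A$, $a \mapsto ba$, so reindexing gives
\[
\chi(b) \cdot S = \sum_{a \in A} \chi(b)\chi(a) = \sum_{a \in A} \chi(ba) = \sum_{a' \in A} \chi(a') = S.
\]
Hence $(\chi(b) - 1) S = 0$, and since $\chi(b) \neq 1$ is a nonzero complex number, we conclude $S = 0$.

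There is essentially no obstacle here; the only point worth naming explicitly is the reindexing step, which uses that $A$ is a group (so $b$ acts by a bijection) but not that $A$ is abelian — commutativity is in fact not needed for the orthogonality of a single character, though the lemma is stated in the abelian setting because that is how it will be applied later. Together the two cases give the formula as stated, completing the proof.
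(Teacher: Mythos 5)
Your proof is correct: the translation trick $\chi(b)S = S$ with $\chi(b)\neq 1$ is the classical argument, and your remark that commutativity of $A$ is not actually needed for a single character sum is accurate. The paper itself gives no proof of this lemma — it simply cites it as a standard fact from \cite[Section 1.12]{ono} — so your write-up supplies exactly the expected textbook argument and there is nothing to reconcile.
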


Now we define the map $\varphi: \mathrm{Hom}(T,\mathbb{Z}/2\mathbb{Z}) \to \mathbb{C}^{\times}$ by
\[
	\varphi(\rho):=\prod_{i<j}{{\left(\frac{p_i}{p_j}\right)}^{\rho(b_{ij})}}\in{\{\pm 1\}}.
\]
Since we have, for $\rho_1, \rho_2 \in \mathrm{Hom}(T,\mathbb{Z}/2\mathbb{Z})$,
\begin{align*}
	\varphi(\rho_1+\rho_2) 
		=\prod_{i<j}{\left(\frac{p_i}{p_j}\right)^{\rho_1(b_{ij})+\rho_2(b_{ij})}}
		=\varphi(\rho_1)\varphi(\rho_2),
\end{align*}
$\varphi$ is a homomorphism. By \cref{orthogonal}, we have
\[
	\sum_{\rho \in \mathrm{Hom}(T,\mathbb{Z}/2\mathbb{Z})}{\varphi(\rho)} = \begin{cases}
		2^{r-1} &\text{if } \varphi=\varepsilon,\\
		0 &\text{if } \varphi\ne\varepsilon,
	\end{cases}
\]
since the order of $\mathrm{Hom}(T,\mathbb{Z}/2\mathbb{Z})$ is $2^{r-1}$.

Therefore, proving \cref{maint} is equivalent to showing the following theorem.

\begin{thm}
	For a quadratic residue graph $G(S)$, the following conditions are equivalent. 
	\begin{enumerate}[(1)]
		\item[$(1)$] $\varphi=\varepsilon$.
		\item[$(2)$] Any connected component of $G(S)$ is a circuit.
	\end{enumerate}
\end{thm}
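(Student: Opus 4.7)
The plan is to reduce the stated equivalence to Euler's theorem (\cref{G-even-graph}) via \cref{bij=0}, by rewriting the character $\varphi$ in a form that depends only on the single element $B := \sum_{\{p_i,p_j\} \in E(G(S))} b_{ij} \in T$.

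First I would observe that in the product defining $\varphi(\rho)$, the factor corresponding to the pair $\{p_i, p_j\}$ contributes $1$ whenever $\{p_i,p_j\} \notin E(G(S))$ (since then $\left(\frac{p_i}{p_j}\right) = 1$), and contributes $(-1)^{\rho(b_{ij})}$ whenever $\{p_i,p_j\} \in E(G(S))$. Hence
\[
\varphi(\rho) = \prod_{\{p_i,p_j\} \in E(G(S))} (-1)^{\rho(b_{ij})} = (-1)^{\rho(B)},
\]
where the identity $\sum \rho(b_{ij}) = \rho(\sum b_{ij})$ uses $\mathbb{Z}/2\mathbb{Z}$-linearity of $\rho$.

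Next I would argue that $\varphi = \varepsilon$ is equivalent to $B = \mathbf{0}$ in $T$. One direction is clear: if $B = \mathbf{0}$, then $\varphi(\rho) = (-1)^0 = 1$ for every $\rho$. For the converse, since $T \cong (\mathbb{Z}/2\mathbb{Z})^{r-1}$ is a finite-dimensional $\mathbb{F}_2$-vector space, the characters in $\mathrm{Hom}(T, \mathbb{Z}/2\mathbb{Z})$ separate points of $T$; thus if $\rho(B) = 0$ for every $\rho$, then $B = \mathbf{0}$.

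Combining these two observations chains the equivalences as follows: $\varphi = \varepsilon$ iff $B = \mathbf{0}$ iff (by \cref{bij=0}) $G(S)$ is an even graph iff (by \cref{G-even-graph}) every connected component of $G(S)$ is a circuit. There is no real obstacle here; the only point that requires a moment's care is the separation-of-points step, which is simply the standard fact that a finite elementary abelian $2$-group is canonically self-dual via the pairing $(\rho, x) \mapsto \rho(x)$. Once this linear-algebraic identification is in place, the main theorem follows by citing the previously established \cref{bij=0} and \cref{G-even-graph}.
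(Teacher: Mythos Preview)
Your proposal is correct and is essentially identical to the paper's own proof: both rewrite $\varphi(\rho)$ as $(-1)^{\rho(B)}$ with $B=\sum_{\{p_i,p_j\}\in E(G(S))} b_{ij}$, then use that characters of $T$ separate points to conclude $\varphi=\varepsilon \iff B=\mathbf{0}$, and finish by invoking \cref{bij=0} together with \cref{G-even-graph}. The only cosmetic difference is that you state the separation-of-points step and the appeal to Euler's theorem more explicitly than the paper does.
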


\begin{proof}
Suppose that any connected component of $G(S)$ is a circuit.
According to \cref{bij=0}, we know that $\sum_{\{p_i, p_j\} \in E(G(S))} b_{ij} = \mathbf{0}$.
For any $\rho \in \mathrm{Hom}(T,\mathbb{Z}/2\mathbb{Z})$,
\begin{align}\label{phi-rho-0}
	\varphi(\rho) &=\prod_{i<j}{\left(\frac{p_i}{p_j}\right)^{\rho(b_{ij})}}
		= \prod_{\{p_i,p_j\} \in E(G(S))} (-1)^{\rho(b_{ij})}
		= (-1)^{\sum_{\{p_i, p_j\} \in E(G(S))} \rho(b_{ij})}.
\end{align}
Since the exponent equals $\rho(\mathbf{0}) = 0$, we have $\varphi={\varepsilon}$.
 
On the other hand, if $\varphi = \varepsilon$, then \eqref{phi-rho-0} implies that $\rho \left(\sum_{\{p_i, p_j\} \in E(G(S))} b_{ij} \right) = 0$ for any $\rho \in \mathrm{Hom}(T, \mathbb{Z}/2\mathbb{Z})$. Therefore, we have $\sum_{\{p_i, p_j\} \in E(G(S))} b_{ij} = \mathbf{0}$. By \cref{bij=0} again, the graph $G(S)$ is even and any connected component of $G(S)$ is a circuit.
\end{proof}

\section{Mod 2 Dijkgraaf--Witten invariants of double covers of the 3-sphere} \label{s5}

In this section, we give a topological counterpart of \cref{maint} for the mod 2 topological Dijkgraaf--Witten invariant for a double cover of the 3-sphere $S^3$ branched over a link (cf.~\cite[\S 5]{hirano}).
Let $M$ be a connected, oriented, and closed 3-manifold, and let $n$ be an integer with $n \ge 2$. Let $A$ be a finite group and let $c \in {H}^{3}(A, \mathbb{Z} / n \mathbb{Z})$. We have ${H}_{3}(M, \mathbb{Z} / n \mathbb{Z}) \cong \mathbb{Z} / n \mathbb{Z}$ and we denote by $[M] \in {H}_{3}(M, \mathbb{Z} / n \mathbb{Z})$ the fundamental homology class of $M$.
 
For $\rho \in \mathrm{Hom}\left(\pi_{1}(M), A\right)$, we define the mod $n$ {\it arithmetic Chern--Simons invariant} $CS_c(\rho)$ by the image of $c$ under the composition
\[
	{H}^{3}(A, \mathbb{Z} / n \mathbb{Z}) \stackrel{\rho^{*}}{\longrightarrow} H^{3}\left(\pi_{1}(M), \mathbb{Z} / n \mathbb{Z}\right) \stackrel{j_{3}}{\longrightarrow} {H}^{3}(M, \mathbb{Z} / n \mathbb{Z}) \stackrel{\langle \cdot, [M] \rangle}{\longrightarrow} \mathbb{Z} / n \mathbb{Z},
\]
where $j_3$ is the edge homomorphisms  in the modified Hochschild--Serre spectral sequence
\[
	{H}^{p}(\pi_{1}(M), {H}^{q}(\widetilde{M}, \mathbb{Z} / n \mathbb{Z})) \Rightarrow {H}^{p+q}(M, \mathbb{Z} / n \mathbb{Z}).
\]
We then define the mod $n$ {\it Dijkgraaf--Witten invariant} of $M$ associated to $c$ by
\[
	Z_{c}(M) = \frac{1}{\# A} \sum_{\rho \in \mathrm{Hom}(\pi_{1}(M), A)} \exp \left(\frac{2 \pi i}{n} CS_{c}(\rho)\right).
\]
We write simply $Z_M$ for $Z_{c}(M)$.

Now consider the case where $A=\mathbb{Z} / 2 \mathbb{Z}$ and $c\in {H}^{3}(A, \mathbb{Z} / 2 \mathbb{Z})$ is the unique non-trivial class. Let $\mathcal{L}=\mathcal{K}_{1} \cup \mathcal{K}_{2} \cup \cdots \cup \mathcal{K}_{r}$ be a tame link in the  3-sphere $S^{3}$ and let $h: M \rightarrow S^{3}$ be the double covering ramified over $\mathcal{L}$ obtained by the unramified covering $Y \rightarrow X:=S^{3} \setminus \mathcal{L}$ corresponding to the kernel of the surjective homomorphism ${H}_{1}(X) \rightarrow \mathbb{Z} / 2 \mathbb{Z}$ that maps any meridian of $\mathcal{K}_{i}$ to $1 \in \mathbb{Z} / 2 \mathbb{Z}$.

Then Hirano showed the following formula for the mod 2 Dijkgraaf--Witten invariants of double covers of the 3-sphere.

\begin{thm}[{\cite[Corollary 5.3.2]{hirano}}]
	We have
	\[
		Z_M = \frac{1}{2} \sum_{\rho \in \mathrm{Hom} (T, \mathbb{Z} / 2 \mathbb{Z})} \exp \left(\pi i \sum_{i<j} \rho\left(b_{i j}\right) \mathrm{lk}\left(\mathcal{K}_{i}, \mathcal{K}_{j}\right) \bmod 2\right),
	\]
	where $\mathrm{lk}(\cdot,\cdot) \bmod 2$ denotes the mod $2$ linking number.
\end{thm}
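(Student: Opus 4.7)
The plan is to reproduce, in the topological setting, the structure of the argument that establishes \cref{Hirano-formula} in the arithmetic setting, transporting every ingredient via the analogy ``prime $\leftrightarrow$ knot'' (so that quadratic reciprocity becomes the symmetry of the linking form). Starting from
\[
Z_M \;=\; \frac{1}{2}\sum_{\rho \in \mathrm{Hom}(\pi_1(M),\mathbb{Z}/2\mathbb{Z})} \exp\bigl(\pi i\, CS_c(\rho)\bigr),
\]
the proof splits into two sub-problems: enumerating the characters $\rho$ and evaluating $CS_c(\rho)$ in closed form.

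For the enumeration, I would apply covering-space theory to the branched double cover $M \to S^3$, which is classified by $\chi_0 \colon \pi_1(X) \to \mathbb{Z}/2\mathbb{Z}$, $X := S^3 \setminus \mathcal{L}$, with $\chi_0(m_i) = 1$ for every meridian $m_i$ of $\mathcal{K}_i$. Any character $\rho \colon \pi_1(M) \to \mathbb{Z}/2\mathbb{Z}$ lifts (after passing to the unramified $Y \to X$ and then to $M$) to a character $\chi \colon H_1(X;\mathbb{Z}) = \mathbb{Z}^r \to \mathbb{Z}/2\mathbb{Z}$ of the link complement; two such lifts yield the same $\rho$ precisely when they differ by $\chi_0$. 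Writing $y_i := \chi(m_i)$, this produces a bijection
\[
\mathrm{Hom}(\pi_1(M),\mathbb{Z}/2\mathbb{Z}) \;\cong\; (\mathbb{Z}/2\mathbb{Z})^r / \langle (1,\ldots,1) \rangle \;\cong\; \mathrm{Hom}(T,\mathbb{Z}/2\mathbb{Z}),
\]
under which $\rho(b_{ij}) = y_i + y_j \bmod 2$ (well defined because $(1,\ldots,1) \cdot b_{ij} = 2 \equiv 0$).

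For the Chern--Simons invariant, $H^{\ast}(\mathbb{Z}/2\mathbb{Z};\mathbb{Z}/2\mathbb{Z}) = \mathbb{F}_2[x]$ and the non-trivial $c$ is $x^3$, so $CS_c(\rho) = \langle u_\rho \cup u_\rho \cup u_\rho,\, [M]\rangle$ with $u_\rho = \rho^{\ast} x \in H^1(M;\mathbb{Z}/2\mathbb{Z})$. Using $u^3 = u \cup \mathrm{Sq}^1(u) = u \cup \beta(u)$, this equals the mod-$2$ self-linking of $u_\rho$. I would compute it geometrically by choosing Poincar\'e-dual representatives of $u_\rho$ as a combination (weighted by the $y_i$) of the preimages in $M$ of Seifert surfaces $\Sigma_i$ of the $\mathcal{K}_i$, and then evaluate the triple intersection of such surfaces by pushing forward through $M \to S^3$: the branching reduces the calculation to pairwise intersections $\Sigma_i \cap \Sigma_j$ in $S^3$, which count $\mathrm{lk}(\mathcal{K}_i,\mathcal{K}_j)$. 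The outcome is
\[
CS_c(\rho) \;\equiv\; \sum_{i<j} \rho(b_{ij})\,\mathrm{lk}(\mathcal{K}_i,\mathcal{K}_j) \pmod{2},
\]
and substituting into the definition of $Z_M$ yields the claim.

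The hard part is the cup-product evaluation: showing that the mod-$2$ triple cup product on the branched double cover really matches the weighted sum of pairwise linking numbers. One approach is a careful Poincar\'e-dual intersection calculation with lifted Seifert surfaces, tracking how the double branching alters the mod-$2$ intersection count. An alternative is to compute $\beta = \mathrm{Sq}^1$ on $H^1(M;\mathbb{Z}/2\mathbb{Z})$ via the Goeritz/linking matrix of $M$ recovered from $\mathcal{L}$, and then apply Poincar\'e duality. This is the only genuinely three-dimensional input in the proof, and it is the topological counterpart of Hirano's use of the Artin--Verdier pairing together with the explicit cup-product formula for Kummer characters in the arithmetic setting.
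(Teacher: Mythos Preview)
The paper does not prove this theorem at all: it is stated with the citation \cite[Corollary 5.3.2]{hirano} and used as a black box, exactly as \cref{Hirano-formula} is in the arithmetic case. So there is no ``paper's own proof'' to compare your proposal against; what you have written is an attempted reconstruction of Hirano's argument, not of anything in the present paper.

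As a sketch your outline is plausible in its broad shape --- parametrise $\mathrm{Hom}(\pi_1(M),\mathbb{Z}/2\mathbb{Z})$ by lifts to the link complement modulo $\chi_0$, and then evaluate the triple cup product via Poincar\'e duality and linking numbers --- but you yourself flag the hard step (the identification of $\langle u_\rho^3,[M]\rangle$ with the weighted sum of pairwise linking numbers) as something you have not actually carried out. That step is not obvious: the passage from triple self-intersection of a lifted surface in $M$ to pairwise linking numbers downstairs in $S^3$ requires a careful analysis of how the branch locus interacts with the lifted Seifert surfaces, and your one-line description (``the branching reduces the calculation to pairwise intersections'') hides all the content. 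If you want to turn this into a proof you would need either to consult Hirano's paper for the actual computation, or to supply a genuine argument for that reduction (for instance via an explicit transfer/Gysin computation for the branched cover, or by computing $H^*(M;\mathbb{Z}/2\mathbb{Z})$ and its ring structure from a presentation of $M$).
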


Then, we define the \emph{linking graph} $D_{\mathcal{L}}$ associated to $\mathcal{L}$ as follows. We put $\mathcal{K}_{1}, \mathcal{K}_{2},  \dots, \mathcal{K}_{r}$ clock-wisely as vertices of the regular $r$-polygon. Two vertices $\mathcal{K}_i$ and $\mathcal{K}_j$ are linked (adjacent) if $\mathrm{lk}\left(\mathcal{K}_{i}, \mathcal{K}_{j}\right) \equiv 1 \bmod 2$.  Since $\mathrm{lk}\left(\mathcal{K}_{i}, \mathcal{K}_{j}\right)=\mathrm{lk}\left(\mathcal{K}_{j}, \mathcal{K}_{i}\right)$, $D_{\mathcal{L}}$ is well-defined.

Then by the same method as in the arithmetic case, we have,
\begin{thm}\label{main-top}
	We have
	\[
		Z_M = \begin{cases}
			2^{r-2} &\text{if any connected component of $D_{\mathcal{L}}$ is a circuit,}\\
			0 &\text{if otherwise.}
		\end{cases}
	\]
\end{thm}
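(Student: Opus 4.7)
The plan is to follow the argument for \cref{maint} essentially verbatim, since Hirano's topological formula has the same structure as the arithmetic one. First, I would rewrite the exponential using $\exp(\pi i n) = (-1)^n$ to convert the sum into a multiplicative form:
\[
Z_M = \frac{1}{2} \sum_{\rho \in \mathrm{Hom}(T,\mathbb{Z}/2\mathbb{Z})} \prod_{i<j} (-1)^{\rho(b_{ij})\,\mathrm{lk}(\mathcal{K}_i,\mathcal{K}_j)}.
\]
By the very definition of $D_{\mathcal{L}}$, the factor $(-1)^{\mathrm{lk}(\mathcal{K}_i,\mathcal{K}_j)}$ equals $-1$ precisely when $\{\mathcal{K}_i,\mathcal{K}_j\} \in E(D_{\mathcal{L}})$ and equals $1$ otherwise, so
\[
Z_M = \frac{1}{2} \sum_{\rho \in \mathrm{Hom}(T,\mathbb{Z}/2\mathbb{Z})} \prod_{\{\mathcal{K}_i,\mathcal{K}_j\} \in E(D_{\mathcal{L}})} (-1)^{\rho(b_{ij})}.
\]
This is exactly the arithmetic expression with the Legendre symbol $\left(\frac{p_i}{p_j}\right)$ replaced by $(-1)^{\mathrm{lk}(\mathcal{K}_i,\mathcal{K}_j)}$.

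Next, define $\psi \colon \mathrm{Hom}(T,\mathbb{Z}/2\mathbb{Z}) \to \mathbb{C}^\times$ by $\psi(\rho) := \prod_{\{\mathcal{K}_i,\mathcal{K}_j\} \in E(D_{\mathcal{L}})} (-1)^{\rho(b_{ij})}$. Just as for the map $\varphi$ in \cref{s4}, $\psi$ is a group homomorphism (character) on $\mathrm{Hom}(T,\mathbb{Z}/2\mathbb{Z})$, which has order $2^{r-1}$. Applying \cref{orthogonal} gives
\[
\sum_{\rho} \psi(\rho) = \begin{cases} 2^{r-1} & \text{if } \psi = \varepsilon, \\ 0 & \text{otherwise,} \end{cases}
\]
so $Z_M$ is either $2^{r-2}$ or $0$, and the theorem reduces to characterizing when $\psi = \varepsilon$.

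For this, rewrite $\psi(\rho) = (-1)^{\rho(s_{\mathcal{L}})}$ where $s_{\mathcal{L}} := \sum_{\{\mathcal{K}_i,\mathcal{K}_j\} \in E(D_{\mathcal{L}})} b_{ij} \in T$. Since the characters of $T$ separate points, $\psi = \varepsilon$ iff $s_{\mathcal{L}} = \mathbf{0}$. The proof of \cref{bij=0} uses only the purely combinatorial identity $\sum_{\{v_i,v_j\} \in E(G)} b_{ij} = (\deg(v_i) \bmod 2)_i$, so it applies to any graph on $\{1,\ldots,r\}$; in particular $s_{\mathcal{L}} = \mathbf{0}$ iff $D_{\mathcal{L}}$ is an even graph. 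Finally, \cref{G-even-graph} (Euler's theorem) says this is equivalent to every connected component of $D_{\mathcal{L}}$ being a circuit, which yields \cref{main-top}.

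I do not expect any serious obstacle: the whole argument is a formal transcription of the arithmetic proof, since both the quadratic residue symbol and $(-1)^{\mathrm{lk}}$ take values in $\{\pm 1\}$ and are symmetric in $i,j$. The only points requiring mild care are the $\bmod\,2$ reduction in the exponent (automatic because $(-1)^n$ depends only on $n \bmod 2$) and the observation that \cref{bij=0} and \cref{G-even-graph} are statements about abstract graphs and hence apply equally well to $D_{\mathcal{L}}$ as to $G(S)$.
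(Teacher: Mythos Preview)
Your proposal is correct and follows exactly the approach the paper indicates: the paper itself gives no separate proof for \cref{main-top}, writing only ``by the same method as in the arithmetic case,'' and your argument carries out precisely that transcription. Your explicit identification of $\psi(\rho)=(-1)^{\rho(s_{\mathcal L})}$ with $s_{\mathcal L}=\sum_{\{\mathcal K_i,\mathcal K_j\}\in E(D_{\mathcal L})} b_{ij}$, together with the observation that \cref{bij=0} and \cref{G-even-graph} apply to any graph on $r$ vertices, is the correct way to make the paper's one-line remark rigorous.
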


\begin{ex}
	Let $\mathcal{L} = \mathcal{K}_{1} \cup \mathcal{K}_{2} \cup \mathcal{K}_{3} \cup \mathcal{K}_{4}$ be the following link (left figure) in $S^{3}$ so that the linking graph $D_{\mathcal{L}}$ is given by the right figure. Let $M$ be the double covering of $S^{3}$ ramified along $\mathcal{L}$.
By \cref{main-top}, we have $Z_M = 2^2 = 4$. 

\begin{tikzpicture}[scale=1.0, >=Stealth]
\begin{knot}[
clip width=3.5,
flip crossing=1,
flip crossing=3,
flip crossing=6
]
\strand (2,2) circle [radius=1];
\strand (1.3,1) circle [radius=1];
\strand (2.7,1) circle [radius=1];
\end{knot}
\node at(2,3.2) {$\mathcal{K}_{1}$};
\node at(0,1) {$\mathcal{K}_{3}$};
\node at(4,1) {$\mathcal{K}_{2}$};
\draw(-2.7,1)circle(1);
\node at(-4.2,1) {$\mathcal{K}_{4}$};

\begin{scope}[xshift=9cm]
\coordinate (A) at (-3,3);
\coordinate (B) at (0,3);
\coordinate (C) at (0,0);
       \coordinate (D) at (-3,0) ;

\draw(A)node[above]{$\mathcal{K}_{1}$};
\draw(B)node[above]{$\mathcal{K}_{2}$};
\draw(C)node[below]{$\mathcal{K}_{3}$};
\draw(D)node[below]{$\mathcal{K}_{4}$};

    \fill (A) circle(1pt);
      \fill (B) circle(1pt);
        \fill (C) circle(1pt);
          \fill (D) circle(1pt);
        \draw  (A)--(B)--(C)--(A);  
\end{scope}
\end{tikzpicture}
    \end{ex}

\section{The number of quadratic residue graphs with $r$ vertices} \label{s6}

In this section, we calculate the number of quadratic residue graphs with $r$ vertices ($r \geq 1$). 
Let $\mathfrak{G}_r$ the set of all graphs with the vertex set $V = \{1, 2, \dots, r\}$, and $\mathfrak{C}_r$ be the subset of $\mathfrak{G}_r$ consisting of graphs whose connected components are all circuits. We recall by \cref{G-even-graph} that $\mathfrak{C}_r$ is the subset of $\mathfrak{G}_r$ consisting of even graphs. It is a crucial remark that, for instance, 
the graphs $G, G' \in \mathfrak{G}_3$, defined by 
$E(G) = \{\{1,2\}, \{1,3\}\}$, and $E(G') = \{\{1,2\}, \{2,3\}\}$, are isomorphic but are distinct in $\mathfrak{G}_3$.

\begin{lem} 
	We have $\#\mathfrak{G}_r=2^{\frac{r(r-1)}{2}}$.
\end{lem}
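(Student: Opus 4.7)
The plan is to observe that a graph in $\mathfrak{G}_r$ is completely determined by its edge set, and that the edge set is an arbitrary subset of the collection of $2$-element subsets of $V = \{1, 2, \dots, r\}$. So the counting reduces to counting subsets of a set of size $\binom{r}{2}$.

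More concretely, I would first recall from \cref{s2} that for a graph $G = (V, E) \in \mathfrak{G}_r$ the edge set satisfies $E \subset \{e \in \mathfrak{P}(V) \mid \#e = 2\}$, and there is no further constraint (loops $e_{ii}$ are excluded, multi-edges are impossible because $E$ is a subset of the power set, and the vertex set is fixed). Hence the assignment $G \mapsto E(G)$ is a bijection between $\mathfrak{G}_r$ and the power set of $\{e \in \mathfrak{P}(V) \mid \#e = 2\}$.

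Second, I would count the ground set: the number of $2$-element subsets of $V$ is $\binom{r}{2} = \frac{r(r-1)}{2}$. Consequently
\[
\#\mathfrak{G}_r = 2^{\binom{r}{2}} = 2^{\frac{r(r-1)}{2}},
\]
which is the claim. There is no real obstacle here; the lemma is a direct consequence of the definition of $\mathfrak{G}_r$ as \emph{all} graphs on the labelled vertex set, as emphasized by the authors' remark that isomorphic but distinctly edged graphs such as $\{\{1,2\},\{1,3\}\}$ and $\{\{1,2\},\{2,3\}\}$ are counted separately.
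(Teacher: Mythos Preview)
Your proof is correct and is essentially identical to the paper's: both observe that a graph in $\mathfrak{G}_r$ is determined by an arbitrary subset of the $\binom{r}{2}$ possible edges, giving $2^{\binom{r}{2}}$ graphs.
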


\begin{proof}
	For two distinct vertices $v_i$ and $v_j$, there are 2 possible ways according that $v_i$ and $v_j$ are connected by an edge or not.
Since there are $\binom{r}{2}=\frac{1}{2}r(r-1)$ ways for choosing 2 distinct vertices, the number of all graphs is $2^{\frac{r(r-1)}{2}}$.
\end{proof}

\begin{lem}\label{odd-even}
	For any graph $G=(V, E)$, the number of odd vertices is even.
\end{lem}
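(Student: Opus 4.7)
The plan is to prove this classical handshake-type statement by exploiting the identity $\sum_{v \in V} \deg(v) = 2\# E$, which follows from counting edge-endpoint incidences in two ways.

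First, I would establish the double-counting identity: each edge $e = \{v_i, v_j\} \in E$ contributes exactly $1$ to $\deg(v_i)$ and exactly $1$ to $\deg(v_j)$, so summing degrees over all vertices counts every edge twice, yielding
\[
	\sum_{v \in V} \deg(v) = 2\#E.
\]
In particular, the left-hand side is an even integer.

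Next, I would split the sum according to the parity of vertices. Writing $V_{\mathrm{even}} = \{v \in V \mid \deg(v) \text{ is even}\}$ and $V_{\mathrm{odd}} = \{v \in V \mid \deg(v) \text{ is odd}\}$, we have
\[
	\sum_{v \in V_{\mathrm{even}}} \deg(v) + \sum_{v \in V_{\mathrm{odd}}} \deg(v) = 2\#E.
\]
The first sum is clearly even as a sum of even integers, so the second sum is even as well. Since it is a sum of $\# V_{\mathrm{odd}}$ odd integers, and a sum of odd integers is even if and only if the number of summands is even, we conclude that $\# V_{\mathrm{odd}}$ is even, which is exactly the claim.

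There is no real obstacle here; the only point requiring care is the double-counting step, which is completely standard. The proof is short and self-contained, requiring nothing beyond the definition of $\deg(v)$ as the number of edges incident to $v$ (equivalently, the number of vertices adjacent to $v$, since our graphs have no multi-edges or loops).
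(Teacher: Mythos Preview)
Your proof is correct and follows the same approach as the paper: both establish the handshake identity $\sum_{v\in V}\deg(v)=2\#E$ and deduce the parity of the number of odd vertices from it. Your version simply spells out the final parity step in more detail than the paper's terse ``from which the assertion follows.''
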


\begin{proof}
	Let $t:= \sum_{v\in{V}}{\deg(v)}$. Then we have $t=2\cdot \#E$ and so $t$ is even, from which the assertion follows.
\end{proof}

\begin{thm} \label{C/G-density}
	We have $\#\mathfrak{C}_r=2^{\frac{(r-1)(r-2)}{2}}$. Hence 
	\[
		\frac{\#\mathfrak{C}_r}{\#\mathfrak{G}_r}=\frac{1}{2^{r-1}}.
	\]
\end{thm}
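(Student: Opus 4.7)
The plan is to combine \cref{G-even-graph} with a parity-completion bijection between graphs on $r-1$ vertices and even graphs on $r$ vertices, which reduces the count of $\mathfrak{C}_r$ to the already computed count $\#\mathfrak{G}_{r-1}$.

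First I would invoke \cref{G-even-graph} to reinterpret $\mathfrak{C}_r$ as the set of even graphs on the vertex set $\{1,2,\dots,r\}$. Next I would define a map $\Phi:\mathfrak{G}_{r-1}\to\mathfrak{C}_r$ as follows. Given $H=(V',E')\in\mathfrak{G}_{r-1}$ with $V'=\{1,\dots,r-1\}$, let $U(H)\subset V'$ be the set of odd vertices of $H$, and set
\[
	\Phi(H) := \bigl(\{1,\dots,r\},\; E' \cup \{\{i,r\} \mid i\in U(H)\}\bigr).
\]
In other words, I attach a new vertex $r$ and connect it to precisely the vertices of $H$ having odd degree.

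The next step is to verify that $\Phi(H)\in\mathfrak{C}_r$. For every $i<r$, the degree of $i$ in $\Phi(H)$ is $\deg_H(i)$ if $i\notin U(H)$, and $\deg_H(i)+1$ if $i\in U(H)$; in either case the new degree is even. For the new vertex $r$, its degree equals $\#U(H)$, which is even by \cref{odd-even}. Hence every vertex of $\Phi(H)$ has even degree, so $\Phi(H)$ is an even graph and therefore belongs to $\mathfrak{C}_r$.

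To show $\Phi$ is a bijection, I would exhibit its inverse: for any $G\in\mathfrak{C}_r$, simply delete the vertex $r$ and all edges incident to it. The resulting graph $H$ on $\{1,\dots,r-1\}$ lies in $\mathfrak{G}_{r-1}$, and because $G$ was even, the set of vertices of $H$ whose degree changed parity upon deletion of $r$ is exactly the set of neighbors of $r$ in $G$; this forces those neighbors to coincide with $U(H)$, so applying $\Phi$ to $H$ recovers $G$. Consequently
\[
	\#\mathfrak{C}_r = \#\mathfrak{G}_{r-1} = 2^{\binom{r-1}{2}} = 2^{\frac{(r-1)(r-2)}{2}},
\]
and the density ratio follows by the direct computation
\[
	\frac{\#\mathfrak{C}_r}{\#\mathfrak{G}_r} = 2^{\frac{(r-1)(r-2)}{2}-\frac{r(r-1)}{2}} = 2^{-(r-1)} = \frac{1}{2^{r-1}}.
\]
There is no real obstacle here; the only subtle point is ensuring that the parity at the new vertex $r$ is automatically correct, which is exactly where \cref{odd-even} does the essential work.
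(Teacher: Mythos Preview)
Your proof is correct and follows essentially the same approach as the paper: both construct the bijection $\mathfrak{G}_{r-1}\to\mathfrak{C}_r$ by adjoining a new vertex adjacent to exactly the odd vertices, with \cref{odd-even} guaranteeing the new vertex itself has even degree. Your write-up is in fact somewhat more explicit than the paper's, spelling out the inverse map and the degree check at each vertex.
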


\begin{proof}
	For $r=1$, we have $\# \mathfrak{C}_1 = 1$. For any $r \geq 2$, we define a map $\mathfrak{G}_{r-1} \to \mathfrak{C}_r$ as follows. For each $G \in \mathfrak{G}_{r-1}$, we add a new vertex connecting with all odd vertices of $G$. By \cref{odd-even}, the resulting graph is an even graph with $r$ vertices. We easily show that the map is bijective, and then, we have $\# \mathfrak{C}_r = \#\mathfrak{G}_{r-1} = 2^{\frac{(r-1)(r-2)}{2}}$.
\end{proof}

\section{A density formula for mod 2 arithmetic Dijkgraaf--Witten invariants} \label{s7}

In this section, we propose a density formula for mod 2 arithmetic Dijkgraaf--Witten invariants, based on the properties of quadratic residue graphs. First, we prepare some notations. Let $\mathcal{P}$ be the set of all prime numbers and $\mathcal{P}_{\equiv b (a)}$ the subset defined by $\mathcal{P}_{\equiv b (a)} = \{p \in \mathcal{P} \mid p \equiv b \pmod{a}\}$. For a positive integer $r > 0$, we define $P_r(x) = \{\{p_1, \dots, p_r\} \subset \mathcal{P}_{\equiv 1 (4)} \cap [1,x] \mid p_i \neq p_j\ (i \neq j)\}$. Our objective is to calculate the natural density of mod 2 arithmetic Dijkgraaf--Witten invariants defined by
\[
	d_r := \lim_{x \to \infty} \frac{\#\{\{p_1, \dots, p_r\} \in P_r(x) \mid Z_k = 2^{r-2}\}}{\# P_r(x)}.
\]
Let $S_r(x) = \{\{p_1,\dots, p_r\} \in P_r(x) \mid \text{any connected component of $G(\{p_1,\dots, p_r\})$ is a circuit}\}$, where $G(\{p_1,\dots, p_r\})$ is the quadratic residue graph associated to $\{p_1, \dots, p_r\}$. According to \cref{G-even-graph} and \cref{maint}, the above density is also expressed as $d_r = \lim_{x \to \infty} \#S_r(x)/\# P_r(x)$. Since we have
\[
	\prod_{1 \le j \le r} \bigg(1 + \prod_{\substack{1 \le i \le r \\ i \neq j}} \left(\frac{p_i}{p_j}\right)\bigg) = \begin{cases}
		2^r &\text{if } G(\{p_1, \dots, p_r\}) \text{ is an even graph},\\
		0 &\text{if otherwise},
	\end{cases}
\]
the number $\# S_r(x)$ is given by
\[
	\# S_r(x) = \frac{1}{2^r} \sum_{\{p_1, \dots, p_r\} \in P_r(x)} \prod_{1 \le j \le r} \bigg(1 + \prod_{\substack{1 \le i \le r \\ i \neq j}} \left(\frac{p_i}{p_j}\right)\bigg).
\]

\begin{thm}\label{density-theorem}
	The density $d_r$ exists and we have
	\[
		d_r = \frac{1}{2^{r-1}} = \frac{\# \mathfrak{C}_r}{\# \mathfrak{G}_r}.
	\]
\end{thm}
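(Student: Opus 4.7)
The plan is to start from the identity
\[
\#S_r(x) = \frac{1}{2^r}\sum_{\{p_1,\dots,p_r\}\in P_r(x)} \prod_{j=1}^{r}\Bigl(1+\prod_{i\ne j}\Bigl(\frac{p_i}{p_j}\Bigr)\Bigr)
\]
already derived just before the statement, expand the product over $j$ as a sum indexed by subsets $J\subseteq\{1,\dots,r\}$, extract the main term coming from the two trivial subsets $J=\varnothing$ and $J=\{1,\dots,r\}$, and show that all other subsets contribute only $o(\#P_r(x))$ thanks to cancellation in quadratic character sums over primes.

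Concretely, I would first expand
\[
\prod_{j=1}^{r}\Bigl(1+\prod_{i\ne j}\Bigl(\frac{p_i}{p_j}\Bigr)\Bigr) = \sum_{J\subseteq\{1,\dots,r\}}\prod_{j\in J}\prod_{i\ne j}\Bigl(\frac{p_i}{p_j}\Bigr).
\]
Since every $p_i\equiv 1\pmod 4$ gives $(p_i/p_j)=(p_j/p_i)$, the exponent of $(p_i/p_j)$ in the $J$-summand is $|J\cap\{i,j\}|\bmod 2$, so after cancellation of squares the summand reduces to
\[
\chi_J(p_1,\dots,p_r) := \prod_{i\in J^c,\ j\in J}\Bigl(\frac{p_i}{p_j}\Bigr).
\]
For $J\in\{\varnothing,\{1,\dots,r\}\}$ this empty product equals $1$, contributing $2\#P_r(x)/2^r=\#P_r(x)/2^{r-1}$, which is exactly the predicted main term. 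The theorem therefore reduces to establishing, for each non-trivial $J$,
\[
\sum_{\{p_1,\dots,p_r\}\in P_r(x)} \chi_J(p_1,\dots,p_r) = o(\#P_r(x)) \qquad (x\to\infty).
\]

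For such a $J$, put $A=J^c$ and $B=J$, both nonempty, and set $P_B:=\prod_{j\in B}p_j$. Multiplicativity of the Jacobi symbol gives $\chi_J=\prod_{i\in A}\bigl(\frac{P_B}{p_i}\bigr)$, and because $P_B$ is squarefree and $\equiv 1\pmod 4$, $(P_B/\cdot)$ is a non-principal real primitive Dirichlet character of conductor $P_B$. Fixing the primes indexed by $B$ and summing over one variable $p_i$ with $i\in A$ then yields a character sum of the form $\sum_{p\le x,\ p\equiv 1(4)}(P_B/p)$; the obstacle is that the modulus $P_B$ grows with the remaining primes, so Siegel--Walfisz (valid only for moduli $(\log x)^{O(1)}$) is insufficient. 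This is exactly where the Heath-Brown quadratic large sieve -- the inequality cited as \cref{HB} in the Acknowledgement -- enters: it provides non-trivial bilinear bounds for sums of $(m/n)$ which are uniform in the size of the moduli. Applied iteratively to variables indexed by $A$ and $B$, together with a dyadic decomposition of $[1,x]$ and a standard removal of diagonal terms where two of the $p_k$'s coincide, it should deliver a power saving over $\#P_r(x)$ for each fixed non-trivial $J$. This character-sum input is the entire technical heart of the argument.

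Granting the bound above for all non-trivial $J$ and summing the contributions of the $2^r$ subsets, only the two trivial subsets survive asymptotically and one obtains $\#S_r(x) = (1+o(1))\#P_r(x)/2^{r-1}$. Dividing by $\#P_r(x)$ and letting $x\to\infty$ proves existence of $d_r$ together with the first equality $d_r=1/2^{r-1}$; the second equality $1/2^{r-1}=\#\mathfrak{C}_r/\#\mathfrak{G}_r$ is immediate from \cref{C/G-density}. All the real difficulty is therefore concentrated in the uniform quadratic character-sum cancellation for non-trivial $J$; the algebraic expansion and bookkeeping surrounding it are routine consequences of quadratic reciprocity and orthogonality.
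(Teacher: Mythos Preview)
Your proposal is correct and follows essentially the same route as the paper: expand into subsets, isolate the two trivial terms as the main contribution, and kill the non-trivial terms via Heath-Brown's quadratic large sieve. The paper's execution of the last step is actually simpler than you suggest---no iteration or dyadic decomposition is needed: after passing to ordered tuples and using symmetry to reduce to a fixed split $\{1,\dots,s\}\sqcup\{s+1,\dots,r\}$, one sets $m=p_1\cdots p_s$ and $n=p_{s+1}\cdots p_r$, applies Cauchy--Schwarz once in $m$, and feeds the resulting bilinear sum directly into \cref{HB}.
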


To prove this theorem, we begin by rephrasing the above counting formula for $\#S_r(x)$.

\begin{lem}
	For any positive integer $r > 0$, we have
	\[
		\#S_r(x) = \frac{1}{2^r} \frac{1}{r!} \sum_{\substack{(p_1, \dots, p_r) \in (\mathcal{P}_{\equiv 1 (4)} \cap [1,x])^r \\ p_i \neq p_j\ (i \neq j)}} \prod_{1 \le j \le r} \bigg(1 + \prod_{\substack{1 \le i \le r \\ i \neq j}} \left(\frac{p_i}{p_j}\right)\bigg).
	\]
\end{lem}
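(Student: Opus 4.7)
The plan is to derive this identity directly by symmetrization, converting the sum over unordered $r$-element subsets in the formula displayed immediately before the lemma into a sum over ordered $r$-tuples with pairwise distinct entries. The key observation is that the function
\[
F(p_1,\dots,p_r) := \prod_{1 \le j \le r}\bigg(1 + \prod_{\substack{1 \le i \le r \\ i \neq j}} \left(\frac{p_i}{p_j}\right)\bigg)
\]
depends only on the underlying unordered set $\{p_1,\dots,p_r\}$, not on the ordering of the tuple.

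First I would verify this symmetry: for any permutation $\sigma \in \mathfrak{S}_r$, substituting $p_i \mapsto p_{\sigma(i)}$ relabels both the outer index $j$ and the inner index $i$. Writing $j' = \sigma(j)$ and $i' = \sigma(i)$ turns the outer product over $j$ into a product over $j'$, and inside each factor the constraint $i \neq j$ becomes $i' \neq j'$, so $F(p_{\sigma(1)},\dots,p_{\sigma(r)}) = F(p_1,\dots,p_r)$. (No number-theoretic input is needed here, though of course quadratic reciprocity with $p_i \equiv 1 \pmod 4$ would also give the pairwise symmetry $(p_i/p_j)=(p_j/p_i)$.)

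Next, since each unordered set $\{p_1,\dots,p_r\} \in P_r(x)$ arises from exactly $r!$ ordered tuples with distinct entries in $\mathcal{P}_{\equiv 1(4)} \cap [1,x]$, the symmetry of $F$ gives
\[
\sum_{\{p_1,\dots,p_r\} \in P_r(x)} F(p_1,\dots,p_r) = \frac{1}{r!} \sum_{\substack{(p_1,\dots,p_r) \in (\mathcal{P}_{\equiv 1(4)} \cap [1,x])^r \\ p_i \neq p_j\ (i \neq j)}} F(p_1,\dots,p_r).
\]
Multiplying by $1/2^r$ and invoking the counting formula for $\#S_r(x)$ stated just before the lemma yields the claimed identity.

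I do not anticipate any genuine obstacle: the proof is a purely combinatorial symmetrization, and the only step requiring any care is writing out the reindexing that confirms $F$ is invariant under the $\mathfrak{S}_r$-action. The interest of this reformulation is presumably that the ordered-tuple sum (with the cross-diagonal restriction $p_i \neq p_j$) is better suited to the asymptotic analysis in the next steps toward \cref{density-theorem}, where one will want to expand the product and apply an equidistribution result for products of Legendre symbols over primes in arithmetic progressions.
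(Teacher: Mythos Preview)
Your proposal is correct and follows essentially the same approach as the paper: the paper simply observes that the $\mathfrak{S}_r$-action on the set $Q_r(x)$ of ordered tuples gives an $r!$-to-$1$ correspondence with $P_r(x)$ and declares the identity immediate. Your write-up is just a more explicit version of this, spelling out the $\mathfrak{S}_r$-invariance of the summand that the paper leaves tacit.
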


\begin{proof}
	The action of the symmetric group $\mathfrak{S}_r$ to the set
	\begin{align}\label{Qr(x)}
		Q_r(x) := \{(p_1, \dots, p_r) \in (\mathcal{P}_{\equiv 1 (4)} \cap [1,x])^r \mid p_i \neq p_j\ (i \neq j)\}
	\end{align}
	implies a natural $r!$ to $1$ correspondence between $Q_r(x)$ and $P_r(x)$, which immediately gives the desired equation.
\end{proof}

Furthermore, the right-hand side can be expanded as follows.
\begin{align*}
	\#S_r(x) &= \frac{1}{2^r r!} \sum_{(p_1, \dots, p_r) \in Q_r(x)} \left(2 + \sum_{\substack{e_1, \dots, e_r \in \{0, 1\} \\ 0 < e_1 + \cdots + e_r < r}} \left(\frac{\prod_{1 \le i \le r} p_i^{e_i}}{\prod_{1 \le i \le r} p_i^{1-e_i}} \right) \right)\\
	&= \frac{\# Q_r(x)}{2^{r-1} r!} + \frac{1}{2^r r!} \sum_{\substack{e_1, \dots, e_r \in \{0, 1\} \\ 0 < e_1 + \cdots + e_r < r}} \sum_{(p_1, \dots, p_r) \in Q_r(x)} \left(\frac{\prod_{1 \le i \le r} p_i^{e_i}}{\prod_{1 \le i \le r} p_i^{1-e_i}} \right),
\end{align*}
where $\left(\frac{\cdot}{\cdot}\right)$ is the Jacobi symbol.
To evaluate each term, we recall the following classical result (cf.~\cite[Section 7-4]{LeV}).

\begin{prop}[{The prime number theorem for arithmetic progressions}]
	For positive integers $a$ and $b$ with $(a,b) = 1$, the number $\pi_{a,b}(x) = \# (\mathcal{P}_{\equiv b (a)} \cap [1,x])$ satisfies
	\[
		\pi_{a,b}(x) \sim \frac{1}{\varphi(a)} \frac{x}{\log x}
	\]
	as $x \to \infty$, where $\varphi(a)$ is the Euler totient function.
\end{prop}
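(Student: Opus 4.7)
The plan is the classical analytic proof of the prime number theorem for arithmetic progressions, due to de la Vallée Poussin, which proceeds through Dirichlet $L$-functions and a Wiener--Ikehara-type Tauberian input. Let $\chi$ range over the Dirichlet characters mod $a$, with $\chi_0$ the principal one, and define $L(s,\chi) = \sum_{n\ge 1}\chi(n)n^{-s}$, absolutely convergent for $\operatorname{Re}(s) > 1$ with Euler product $\prod_p (1-\chi(p)p^{-s})^{-1}$. By orthogonality of the characters mod $a$, the von~Mangoldt-weighted count on the residue class $b$ sieves out as
\[
\psi_{a,b}(x) := \sum_{\substack{n\le x\\ n\equiv b\,(a)}} \Lambda(n) = \frac{1}{\varphi(a)} \sum_\chi \overline{\chi(b)}\, \psi(x,\chi),
\]
where $\psi(x,\chi) = \sum_{n\le x}\chi(n)\Lambda(n)$, and the logarithmic derivative identity $-L'(s,\chi)/L(s,\chi) = \sum_n \chi(n)\Lambda(n)n^{-s}$ connects $\psi(x,\chi)$ to the analytic behavior of $L(s,\chi)$.

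The first analytic step is to continue each $L(s,\chi)$ to $\operatorname{Re}(s) > 0$. For nonprincipal $\chi$ this follows by partial summation from the uniform boundedness of the partial character sums $\sum_{n\le N}\chi(n)$; for $\chi_0$ one uses $L(s,\chi_0) = \zeta(s)\prod_{p\mid a}(1-p^{-s})$ to inherit the simple pole of $\zeta(s)$ at $s=1$ and its continuation past $\operatorname{Re}(s)=1$. The second and crucial step is non-vanishing of $L(s,\chi)$ on the line $\operatorname{Re}(s)=1$: for $s = 1+it$ with $t\ne 0$ one applies the trigonometric inequality $3 + 4\cos\theta + \cos 2\theta \ge 0$ to $|\zeta(s)^3 L(s,\chi)^4 L(s,\chi^2)|$ near $s=1+it$, exactly as in the Hadamard--de la Vallée Poussin treatment of $\zeta$.

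The main obstacle is non-vanishing of $L(1,\chi)$ when $\chi$ is a \emph{real} nonprincipal character, since then $\chi^2 = \chi_0$ contributes a pole rather than a zero at $s=1$ and the previous trick degenerates. I would handle this via Landau's theorem applied to $F(s) := \prod_\chi L(s,\chi)$: expanding the Euler products shows $F(s)$ is a Dirichlet series with nonnegative coefficients; it has a single simple pole at $s=1$ from $\chi_0$; and if any $L(1,\chi)$ vanished that pole would be cancelled, making $F$ holomorphic throughout $\operatorname{Re}(s) > 0$, whereupon Landau's theorem on Dirichlet series with nonnegative coefficients would force convergence of $F$ arbitrarily far to the left of $s=1$, contradicting the explicit size of its coefficients at, say, $s = 1/2$. (A classical alternative is Dirichlet's class number formula, giving $L(1,\chi) > 0$ directly for real nonprincipal $\chi$.)

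Having established analytic continuation to $\operatorname{Re}(s) \ge 1$ and non-vanishing on that line, I would invoke the Wiener--Ikehara Tauberian theorem (or an equivalent contour-shifting Perron-formula argument) applied to $-L'(s,\chi)/L(s,\chi)$, which is meromorphic in a neighborhood of $\operatorname{Re}(s)\ge 1$ with a simple pole of residue $1$ at $s=1$ exactly when $\chi=\chi_0$. This yields $\psi(x,\chi_0) \sim x$ and $\psi(x,\chi) = o(x)$ for nonprincipal $\chi$. Substituting into the orthogonality identity gives $\psi_{a,b}(x) \sim x/\varphi(a)$, and the usual routine cleanup — discarding prime-power contributions (bounded by $O(\sqrt x\,\log x)$) and partial summation converting the $\Lambda$-weighted count to the unweighted $\pi_{a,b}(x)$ — delivers the claimed asymptotic $\pi_{a,b}(x) \sim x/(\varphi(a)\log x)$.
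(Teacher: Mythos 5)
The paper does not prove this proposition at all: it is quoted as a classical black box, with a citation to LeVeque (Section 7--4), and nothing in the paper's argument depends on more than the stated asymptotic. Your sketch is the standard de la Vall\'ee Poussin proof and is correct in outline: orthogonality of characters mod $a$ to isolate the residue class, analytic continuation of $L(s,\chi)$ past $\operatorname{Re}(s)=1$, the $3+4\cos\theta+\cos 2\theta$ inequality for non-vanishing off $s=1$, Landau's theorem on $\prod_\chi L(s,\chi)$ for the real-character case at $s=1$, a Tauberian step, and partial summation from $\psi_{a,b}$ to $\pi_{a,b}$. One point to tighten: the Wiener--Ikehara theorem requires nonnegative coefficients, so it cannot be applied to $-L'(s,\chi)/L(s,\chi)$ for an individual nonprincipal $\chi$; the clean fix is to apply it once to the combination $\sum_{n\equiv b\,(a)}\Lambda(n)n^{-s}=\frac{1}{\varphi(a)}\sum_\chi\overline{\chi(b)}\left(-L'/L\right)(s,\chi)$, which does have nonnegative coefficients and a simple pole of residue $1/\varphi(a)$ at $s=1$, or else to use the Perron/contour-shifting variant you mention in parentheses. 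With that adjustment the argument is complete; since the paper supplies no proof, there is nothing to compare beyond noting that your route is the standard one the cited reference follows.
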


The estimation of the term $\# Q_r(x)$ immediately follows from the proposition. In fact, we have
\begin{align*}
	\# Q_r(x) = \prod_{j=0}^{r-1} (\pi_{4,1}(x) - j) \sim \pi_{4,1}(x)^r \sim \frac{1}{2^r} \frac{x^r}{\log^r x}
\end{align*}
as $x \to \infty$. To evaluate the remaining terms, a more detailed evaluation on Jacobi symbols is required.

\begin{thm}[{\cite[Theorem 1]{Heath}}]\label{HB}
	For any $\varepsilon > 0$, there exists a constant $C_\varepsilon > 0$ such that the following holds. Let $M, N$ be positive integers, and let $a_1, \dots, a_N$ be arbitrary complex numbers. Then 
	\[
		\sideset{}{^*}\sum_{m \le M} \left|\sideset{}{^*}\sum_{n \le N} a_n \left(\frac{n}{m}\right) \right|^2 \leq C_\varepsilon (MN)^\varepsilon (M+N) \sideset{}{^*}\sum_{n \le N} |a_n|^2,
	\]
	where $\sum\nolimits^*$ indicates restriction to positive odd square-free values and $\left(\frac{\cdot}{\cdot}\right)$ stands for the Jacobi symbol.
\end{thm}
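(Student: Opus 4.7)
The plan is to follow Heath-Brown's original strategy, whose two pillars are duality of the large sieve and a careful use of quadratic reciprocity. First, I would expand the square,
\[
	\sideset{}{^*}\sum_{m \le M} \Bigl|\sideset{}{^*}\sum_{n \le N} a_n \left(\frac{n}{m}\right)\Bigr|^2 = \sideset{}{^*}\sum_{n_1, n_2 \le N} a_{n_1}\overline{a_{n_2}} \sideset{}{^*}\sum_{m \le M} \left(\frac{n_1 n_2}{m}\right).
\]
Since $n_1, n_2$ are squarefree, $n_1 n_2$ is a square if and only if $n_1 = n_2$, so the diagonal contributes at most $M\sum_n |a_n|^2$, producing the $M$-piece of the target bound. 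For $n_1 \ne n_2$, write $n_1 n_2 = d^2 k$ with $k$ squarefree and $k > 1$, so $(n_1 n_2 / m) = (k/m)$ whenever $(m,d) = 1$.

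Next, I would apply quadratic reciprocity to rewrite $(k/m) = \varepsilon_{m,k}(m/k)$, with a sign $\varepsilon_{m,k} \in \{\pm 1\}$ depending only on $m, k$ modulo $8$. Splitting $m$ into residue classes mod $8$, the inner sum becomes a genuine Dirichlet-character sum modulo $8k$. The Pólya--Vinogradov estimate alone only gives $O(\sqrt{k}\log k)$, which is too weak for the claimed bound; the additional savings must come from averaging over $k$. The key input is the standard multiplicative large sieve
\[
	\sum_{q \le Q} \sideset{}{^*}\sum_{\chi \bmod q} \Bigl|\sum_{m \le M} b_m \chi(m)\Bigr|^2 \ll (Q^2 + M) \sum_m |b_m|^2,
\]
applied to the family of real characters $\{(\cdot / k) : k \le N^2 \text{ squarefree}\}$, which are pairwise inequivalent and therefore exhibit the required square-root cancellation on average.

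The main obstacle is that the reorganized coefficients $c_k = \sum_{n_1 n_2 = d^2 k} a_{n_1}\overline{a_{n_2}}$ attached to each squarefree $k$ are not readily controlled by $\sum_n |a_n|^2$; a naive Cauchy--Schwarz step loses too much. Heath-Brown's resolution is a recursive/dyadic scheme that exploits the self-dual structure of the inequality under swapping $(M,N)$: an estimate at level $(M,N)$ feeds back into its dual at level $(N,M)$ through the duality principle of the large sieve, and an inductive argument closes the loop. The factor $d^2$ is stripped off by Möbius inversion, contributing a divisor sum $\sum_d \tau(d)^{O(1)}$, and together with the Pólya--Vinogradov log-factors this accounts for the $(MN)^\varepsilon$ loss in the final inequality.
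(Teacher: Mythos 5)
The first thing to say is that the paper offers no proof of this statement: it is Theorem 1 of Heath-Brown's paper \cite{Heath}, imported as a black box, so there is no internal argument to compare yours against. Judged on its own terms, your proposal is a reasonable high-level outline of Heath-Brown's strategy (square expansion, diagonal versus off-diagonal, reciprocity, duality, iteration), but it is not a proof, and the gap sits exactly at the theorem's point of difficulty.

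Concretely: after you expand the square and isolate the diagonal, the off-diagonal terms involve Jacobi symbols $\left(\frac{k}{m}\right)$ with squarefree $k = n_1 n_2/d^2$ ranging up to $N^2$. The multiplicative large sieve you quote gives a bound of the shape $(Q^2+M)\sum_m |b_m|^2$ with $Q$ the size of the moduli; with $Q \asymp N^2$ this is $N^4 + M$, hopelessly weaker than the target $M+N$. Pairwise inequivalence of the characters $\left(\frac{\cdot}{k}\right)$ is precisely the hypothesis the standard large sieve already exploits, so it cannot by itself deliver the ``required square-root cancellation''; the entire content of Heath-Brown's theorem is that this sparse family of \emph{real} characters beats the general large sieve bound. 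The mechanism that achieves this --- Poisson summation and Gauss-sum evaluation of the inner sums, control of the convolved coefficients $c_k = \sum_{n_1 n_2 = d^2 k} a_{n_1}\overline{a_{n_2}}$ without a lossy Cauchy--Schwarz, and the induction that trades an estimate at parameters $(M,N)$ for one at smaller parameters --- is exactly the part you describe in a single sentence as ``a recursive/dyadic scheme that closes the loop.'' As written, the proposal names the hard step rather than performing it, so it does not constitute a proof; for the purposes of this paper, citing \cite{Heath} as the authors do is the appropriate treatment.
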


By symmetric properties, all we need to estimate are the following cases.
\begin{lem}
	For any positive integer $0 < s < r$, we have
	\[
		E_s(x) = \sum_{(p_1, \dots, p_r) \in Q_r(x)} \left(\frac{p_{s+1} \cdots p_r}{p_1 \cdots p_s} \right) = o_r(\pi_{4,1}(x)^r)
	\]
	as $x \to \infty$.
\end{lem}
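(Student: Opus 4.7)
The plan is to reduce the problem to Heath--Brown's large sieve (\cref{HB}) via a Cauchy--Schwarz argument. First, by the multiplicativity of the Jacobi symbol, setting $m = p_1 \cdots p_s$ and $n = p_{s+1} \cdots p_r$ we rewrite
\[
\left(\frac{p_{s+1}\cdots p_r}{p_1 \cdots p_s}\right) = \left(\frac{n}{m}\right).
\]
Both $m$ and $n$ are positive odd squarefree integers, and the Jacobi symbol automatically vanishes whenever $\gcd(m,n) > 1$, so the distinctness condition in $Q_r(x)$ costs nothing. Parametrising by unordered sets of prime factors, every pair $(m,n)$ corresponds to $s!(r-s)!$ ordered tuples, which yields
\[
E_s(x) = s!\,(r-s)! \sum_{m} \alpha_m \sum_{n} \beta_n \left(\frac{n}{m}\right),
\]
where $\alpha_m$ (resp.\ $\beta_n$) is the indicator that $m$ (resp.\ $n$) is a product of exactly $s$ (resp.\ $r-s$) distinct primes in $\mathcal{P}_{\equiv 1(4)} \cap [1,x]$.

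Next I would apply the Cauchy--Schwarz inequality in the outer variable $m$ and then invoke \cref{HB} with $M = x^s$ and $N = x^{r-s}$, which is legitimate because $\alpha_m$ and $\beta_n$ are supported on odd squarefree integers. Using the trivial identities $\sum_m \alpha_m = \binom{\pi_{4,1}(x)}{s}$ and $\sum_n \beta_n^2 = \binom{\pi_{4,1}(x)}{r-s}$, this produces, for any $\varepsilon > 0$,
\[
|E_s(x)|^2 \ll_r x^{r\varepsilon}\,(x^s + x^{r-s})\,\pi_{4,1}(x)^r.
\]

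Finally, the prime number theorem for arithmetic progressions gives $\pi_{4,1}(x) \asymp x/\log x$, so taking square roots and dividing by $\pi_{4,1}(x)^r$ leads to
\[
\frac{|E_s(x)|}{\pi_{4,1}(x)^r} \ll_r x^{r\varepsilon/2 + (\max(s,r-s)-r)/2}\,(\log x)^{r/2}.
\]
The hypothesis $0 < s < r$ forces $\max(s,r-s) \leq r-1$, so choosing $\varepsilon < 1/r$ makes the exponent of $x$ strictly negative; this dominates the polylogarithmic factor and yields $E_s(x) = o_r(\pi_{4,1}(x)^r)$. The main obstacle is really just verifying that \cref{HB} applies cleanly to the resulting bilinear form after the distinctness and $\equiv 1 \pmod 4$ constraints have been absorbed into $\alpha_m$ and $\beta_n$; once this is checked, the rest is a direct exponent count, with the cancellation afforded by the large sieve succeeding precisely because $0 < s < r$ ensures that neither $m$ nor $n$ collapses to a single prime factor.
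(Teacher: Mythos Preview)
Your argument is correct and follows essentially the same route as the paper: rewrite $E_s(x)$ as a bilinear sum in $m=p_1\cdots p_s$ and $n=p_{s+1}\cdots p_r$, apply Cauchy--Schwarz in the $m$-variable, invoke \cref{HB} with $M=x^s$, $N=x^{r-s}$, and finish with the exponent count using $\max(s,r-s)\le r-1$. One small quibble: your closing remark that ``neither $m$ nor $n$ collapses to a single prime factor'' is inaccurate when $s=1$ or $s=r-1$, but this does not affect the argument, whose success rests only on the inequality $\max(s,r-s)\le r-1$.
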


\begin{proof}
	We define
	\begin{align*}
		a_m^{(s)} (x)&= \begin{cases}
			s! &\text{if } m \text{ is a product of $s$ distinct primes in } \mathcal{P}_{\equiv 1(4)} \cap [1,x],\\
			0 &\text{if otherwise}
		\end{cases}
	\end{align*}
	and
	\begin{align*}
		b_m(x) = \begin{cases}
			\displaystyle{\sum_{\substack{(p_{s+1}, \dots, p_r) \in (\mathcal{P}_{\equiv 1(4)} \cap [1,x])^{r-s} \\ p_i \neq p_j\ (i \neq j)}} \left(\frac{p_{s+1} \cdots p_r}{m}\right)} &\text{if } m \text{ is square-free and odd},\\
			0 &\text{it otherwise}.
		\end{cases}
	\end{align*}
	Then we have $E_s(x) = \sum_{m \le x^s} a_m^{(s)}(x) b_m(x)$, where we change the variables via $m = p_1 \cdots p_s$. By the Cauchy--Schwarz inequality, 
	\begin{align*}
		E_s(x)^2 &\le \left(\sum_{m \le x^s} a_m^{(s)}(x)^2 \right) \cdot \left(\sum_{m \le x^s} b_m(x)^2 \right)\\
		&\le (s!)^2 \pi_{4,1}(x)^s \sideset{}{^*}\sum_{m \le x^s} \Bigg|\sum_{\substack{(p_{s+1}, \dots, p_r) \in (\mathcal{P}_{\equiv 1(4)} \cap [1,x])^{r-s} \\ p_i \neq p_j\ (i \neq j)}} \left(\frac{p_{s+1} \cdots p_r}{m}\right) \Bigg|^2\\
		&= (s!)^2 \pi_{4,1}(x)^s \sideset{}{^*}\sum_{m \le x^s} \bigg|\ \sideset{}{^*}\sum_{n \le x^{r-s}} a_n^{(r-s)}(x) \left(\frac{n}{m} \right) \bigg|^2,
	\end{align*}
	where we change the variables via $n = p_{s+1} \cdots p_r$. Therefore, by applying \cref{HB}, we obtain
	\begin{align*}
		E_s(x)^2 &\ll_{r,\varepsilon} \pi_{4,1}(x)^s x^{r\varepsilon} (x^s + x^{r-s}) \sideset{}{^*}\sum_{n \le x^{r-s}} |a_n^{(r-s)}(x)|^2\\
		&\ll_{r,\varepsilon} \pi_{4,1}(x)^s x^{r\varepsilon + r-1} \pi_{4,1}(x)^{r-s},
	\end{align*}
	which implies $E_s(x) \ll_{r,\varepsilon} x^{(r\varepsilon + r-1)/2} \pi_{4,1}(x)^{r/2}$. By taking $0 < \varepsilon < 1/2r$, we have $E_s(x) = o_r(\pi_{4,1}(x)^r)$.
\end{proof}

In conclusion, we have
\[
	\#S_r(x) = \frac{\# Q_r(x)}{2^{r-1} r!} + o_r(\pi_{4,1}(x)^r) \sim \frac{\pi_{4,1}(x)^r}{2^{r-1} r!}
\]
as $x \to \infty$. Since $\#P_r(x) = {\pi_{4,1}(x) \choose r} \sim \pi_{4,1}(x)^r/r!$, we conclude the proof of \cref{density-theorem}.



\newcommand{\noopsort}[1]{}
\providecommand{\bysame}{\leavevmode\hbox to3em{\hrulefill}\thinspace}
\providecommand{\MR}{\relax\ifhmode\unskip\space\fi MR }
\providecommand{\MRhref}[2]{%
  \href{http://www.ams.org/mathscinet-getitem?mr=#1}{#2}
}
\providecommand{\href}[2]{#2}



\newpage
\appendix 
\section{Appendix: Some numerical examples of $Z_k$} \label{appendix}

We give numerical computer calculations for Hirano's formula,
\[
	Z_k=\frac{1}{2}\sum_{\rho\in{{\rm Hom}(T,\mathbb{Z}/2\mathbb{Z})}}{\left(\prod_{i<j}{\left({\frac{p_i}{p_j}}\right)}^{\rho(b_{ij})}\right)}.
\]




\subsection*{The case that $r=3$}

We define $\rho_{0}$, $\rho_{1}$, $\rho_{2}$, and $\rho_{3}$ in $\mathrm{Hom}(T, \mathbb{Z} / 2 \mathbb{Z})$ by
\begin{table}[H]
\centering
\begin{tabular}{c|cccccccccc}
	$x$ & $(0,0,0)$ & $(1,1,0)$ & $(0,1,1)$ & $(1,0,1)$ \\ \hline
	$\rho_0(x)$ & 0 & 0 & 0 & 0 \\ 
	$\rho_1(x)$ & 0 & 1 & 0 & 1 \\
	$\rho_2(x)$ & 0 & 0 & 1 & 1 \\ 
	$\rho_3(x)$ & 0 & 1 & 1 & 0 \\ 
\end{tabular}
\end{table}

\begin{center}
 \begin{tabular}{|c|c|c|c|}
         \hline
   A set of prime numbers $S$ & Quadratic residue graph & $Z_k$\\
     \hline
5	,	13	,	17	&		
\begin{tikzpicture}[>=Stealth,scale=0.6]
   \coordinate (a) at (1.5,0);
  \coordinate (b) at (-0.75,1.299);
    \coordinate (c) at (-0.75,-1.299);
  \node  at (a)[anchor=225] {\small{\rm5}} ;
\node at (b)[anchor=-45] {\small{\rm13}} ;
\node at (c) [anchor=45]{\small{\rm17}} ;
    \draw[dashed] (0,0) circle(1.5);
     \draw[dashed,->](0,-1.7)--(0,1.7);
          \draw[dashed,->](-2,0)--(2,0);
         \fill (a) circle (2pt);
                \fill (b) circle (2pt);
                       \fill (c) circle (2pt);
                \draw [line width=1pt] (a)--(b);
                  \draw [line width=1pt] (a)--(c);
 \end{tikzpicture}		&	0	\\
 \hline
5	,	13	,	37	&
\begin{tikzpicture}[>=Stealth,scale=0.6]
   \coordinate (a) at (1.5,0);
  \coordinate (b) at (-0.75,1.299);
    \coordinate (c) at (-0.75,-1.299);
  \node  at (a)[anchor=225] {\small{\rm5}} ;
\node at (b)[anchor=-45] {\small{\rm13}} ;
\node at (c) [anchor=45]{\small{\rm37}} ;
    \draw[dashed] (0,0) circle(1.5);
     \draw[dashed,->](0,-1.7)--(0,1.7);
          \draw[dashed,->](-2,0)--(2,0);
         \fill (a) circle (2pt);
                \fill (b) circle (2pt);
                       \fill (c) circle (2pt);
                \draw [line width=1pt] (a)--(b)--(c)--(a);
 \end{tikzpicture}	&	2	\\
 \hline
5	,	13	,	41	&		\begin{tikzpicture}[>=Stealth,scale=0.6]
   \coordinate (a) at (1.5,0);
  \coordinate (b) at (-0.75,1.299);
    \coordinate (c) at (-0.75,-1.299);
  \node  at (a)[anchor=225] {\small{\rm5}} ;
\node at (b)[anchor=-45] {\small{\rm13}} ;
\node at (c) [anchor=45]{\small{\rm41}} ;
    \draw[dashed] (0,0) circle(1.5);
     \draw[dashed,->](0,-1.7)--(0,1.7);
          \draw[dashed,->](-2,0)--(2,0);
         \fill (a) circle (2pt);
                \fill (b) circle (2pt);
                       \fill (c) circle (2pt);
                \draw [line width=1pt] (a)--(b)--(c);
 \end{tikzpicture}	   &  	0	\\
 \hline
5	,	13	,	61	&		\begin{tikzpicture}[>=Stealth,scale=0.6]
   \coordinate (a) at (1.5,0);
  \coordinate (b) at (-0.75,1.299);
    \coordinate (c) at (-0.75,-1.299);
  \node  at (a)[anchor=225] {\small{\rm5}} ;
\node at (b)[anchor=-45] {\small{\rm13}} ;
\node at (c) [anchor=45]{\small{\rm61}} ;
    \draw[dashed] (0,0) circle(1.5);
          \draw[dashed,->](0,-1.7)--(0,1.7);
          \draw[dashed,->](-2,0)--(2,0);
         \fill (a) circle (2pt);
                \fill (b) circle (2pt);
                       \fill (c) circle (2pt);
                \draw [line width=1pt] (a)--(b);
                 
 \end{tikzpicture}		&	0	\\
 \hline
5	,	29	,	37		&	\begin{tikzpicture}[>=Stealth,scale=0.6]
   \coordinate (a) at (1.5,0);
  \coordinate (b) at (-0.75,1.299);
    \coordinate (c) at (-0.75,-1.299);
  \node  at (a)[anchor=225] {\small{\rm5}} ;
\node at (b)[anchor=-45] {\small{\rm29}} ;
\node at (c) [anchor=45]{\small{\rm37}} ;
    \draw[dashed] (0,0) circle(1.5);
      \draw[dashed,->](0,-1.7)--(0,1.7);
          \draw[dashed,->](-2,0)--(2,0);
         \fill (a) circle (2pt);
                \fill (b) circle (2pt);
                       \fill (c) circle (2pt);
                \draw [line width=1pt] (b)--(c)--(a);
             
 \end{tikzpicture}		&	0	\\
 \hline

     \end{tabular}
\end{center}

\subsection*{The case that $r=4$}

We define $\rho_{0}, \rho_{1}, \dots, \rho_{7}$ in $\mathrm{Hom} (T, \mathbb{Z} / 2 \mathbb{Z})$ by
\begin{table}[H]
\centering
\scalebox{0.9}[0.9]{
\begin{tabular}{c|cccccccccc}
	$x$ & $(0,0,0,0)$ & $(1,1,0,0)$ & $(1,0,1,0)$ & $(1,0,0,1)$ & $(0,1,1,0)$ & $(0,1,0,1)$ & $(0,0,1,1)$ & $(1,1,1,1)$ \\ \hline
	$\rho_0(x)$ & 0 & 0 & 0 & 0 & 0 & 0 & 0 & 0\\ 
	$\rho_1(x)$ & 0 & 1 & 0 & 0 & 1 & 1 & 0 & 1\\ 
	$\rho_2(x)$ & 0 & 0 & 1 & 0 & 1 & 0 & 1 & 1\\ 
	$\rho_3(x)$ & 0 & 0 & 0 & 1 & 0 & 1 & 1 & 1\\ 
	$\rho_4(x)$ & 0 & 1 & 1 & 0 & 0 & 1 & 1 & 0\\ 
	$\rho_5(x)$ & 0 & 1 & 0 & 1 & 1 & 0 & 1 & 0\\ 
	$\rho_6(x)$ & 0 & 0 & 1 & 1 & 1 & 1 & 0 & 0\\ 
	$\rho_7(x)$ & 0 & 1 & 1 & 1 & 0 & 0 & 0 & 1\\ 
\end{tabular}
}
\end{table}

\begin{center}
       \begin{tabular}{|c|c|c|c|}
         \hline
     
     A set of prime numbers $S$ & Quadratic residue graph & $Z_k$\\
  \hline
  5	,	13	,	17	,	29	&	\begin{tikzpicture}[>=Stealth,scale=0.6]
   \coordinate (a) at (1.5,0);
  \coordinate (b) at (0,1.5);
    \coordinate (c) at (-1.5,0);
  \coordinate (d) at (0,-1.5);
  \node  at (a)[anchor=225] {\small{\rm5}} ;
\node at (b)[anchor=-45] {\small{\rm13}} ;
\node at (c) [anchor=-45]{\small{\rm17}} ;
\node at (d) [anchor=45]{\small{\rm29}} ;
    \draw[dashed] (0,0) circle(1.5);
     \draw[dashed,->](0,-2)--(0,2);
          \draw[dashed,->](-2,0)--(2,0);      
         \fill (a) circle (1.3pt);
                \fill (b) circle (1.3pt);
                       \fill (c) circle (1.3pt);
                              \fill (d) circle (1.3pt);
                              \draw [line width=1pt] (b)--(a)--(c)--(d);
                 
 \end{tikzpicture}
	&	0	\\
  \hline
5	,	13	,	17	,	41	&	\begin{tikzpicture}[>=Stealth,scale=0.6]
   \coordinate (a) at (1.5,0);
  \coordinate (b) at (0,1.5);
    \coordinate (c) at (-1.5,0);
  \coordinate (d) at (0,-1.5);
  \node  at (a)[anchor=225] {\small{\rm5}} ;
\node at (b)[anchor=-45] {\small{\rm13}} ;
\node at (c) [anchor=-45]{\small{\rm17}} ;
\node at (d) [anchor=45]{\small{\rm41}} ;
    \draw[dashed] (0,0) circle(1.5);
     \draw[dashed,->](0,-2)--(0,2);
          \draw[dashed,->](-2,0)--(2,0);      
         \fill (a) circle (1.3pt);
                \fill (b) circle (1.3pt);
                       \fill (c) circle (1.3pt);
                              \fill (d) circle (1.3pt);
                              \draw [line width=1pt] (a)--(b)--(d)--(c)--(a);
                               
 \end{tikzpicture}
	&	4	\\
  \hline
5	,	13	,	29	,	53	&	\begin{tikzpicture}[>=Stealth,scale=0.6]
   \coordinate (a) at (1.5,0);
  \coordinate (b) at (0,1.5);
    \coordinate (c) at (-1.5,0);
  \coordinate (d) at (0,-1.5);
  \node  at (a)[anchor=225] {\small{\rm5}} ;
\node at (b)[anchor=-45] {\small{\rm13}} ;
\node at (c) [anchor=-45]{\small{\rm29}} ;
\node at (d) [anchor=45]{\small{\rm53}} ;
    \draw[dashed] (0,0) circle(1.5);
     \draw[dashed,->](0,-2)--(0,2);
          \draw[dashed,->](-2,0)--(2,0);      
         \fill (a) circle (1.3pt);
                \fill (b) circle (1.3pt);
                       \fill (c) circle (1.3pt);
                              \fill (d) circle (1.3pt);
                              \draw [line width=1pt] (a)--(b);
                                \draw  [line width=1pt](a)--(d);
 \end{tikzpicture}
	&	0	\\
  \hline
5	,	13	,	29	,	61	&	\begin{tikzpicture}[>=Stealth,scale=0.6]
   \coordinate (a) at (1.5,0);
  \coordinate (b) at (0,1.5);
    \coordinate (c) at (-1.5,0);
  \coordinate (d) at (0,-1.5);
  \node  at (a)[anchor=225] {\small{\rm5}} ;
\node at (b)[anchor=-45] {\small{\rm13}} ;
\node at (c) [anchor=-45]{\small{\rm29}} ;
\node at (d) [anchor=45]{\small{\rm61}} ;
    \draw[dashed] (0,0) circle(1.5);
     \draw[dashed,->](0,-2)--(0,2);
          \draw[dashed,->](-2,0)--(2,0);      
         \fill (a) circle (1.3pt);
                \fill (b) circle (1.3pt);
                       \fill (c) circle (1.3pt);
                              \fill (d) circle (1.3pt);
                              \draw [line width=1pt] (a)--(b);
                                \draw  [line width=1pt](c)--(d);
 \end{tikzpicture}
 &	0	\\
  \hline
5	,	13	,	37	,	101	&	\begin{tikzpicture}[>=Stealth,scale=0.6]
   \coordinate (a) at (1.5,0);
  \coordinate (b) at (0,1.5);
    \coordinate (c) at (-1.5,0);
  \coordinate (d) at (0,-1.5);
  \node  at (a)[anchor=225] {\small{\rm5}} ;
\node at (b)[anchor=-45] {\small{\rm13}} ;
\node at (c) [anchor=-45]{\small{\rm37}} ;
\node at (d) [anchor=45]{\small{\rm101}} ;
    \draw[dashed] (0,0) circle(1.5);
     \draw[dashed,->](0,-2)--(0,2);
          \draw[dashed,->](-2,0)--(2,0);      
         \fill (a) circle (1.3pt);
                \fill (b) circle (1.3pt);
                       \fill (c) circle (1.3pt);
                              \fill (d) circle (1.3pt);
                              \draw [line width=1pt] (a)--(b)--(c)--(a);
                           
 \end{tikzpicture}
	&	4	\\
      \hline
      5	,	17	,	29	,	37	&	\begin{tikzpicture}[>=Stealth,scale=0.6]
   \coordinate (a) at (1.5,0);
  \coordinate (b) at (0,1.5);
    \coordinate (c) at (-1.5,0);
  \coordinate (d) at (0,-1.5);
  \node  at (a)[anchor=225] {\small{\rm5}} ;
\node at (b)[anchor=-45] {\small{\rm17}} ;
\node at (c) [anchor=-45]{\small{\rm29}} ;
\node at (d) [anchor=45]{\small{\rm37}} ;
    \draw[dashed] (0,0) circle(1.5);
     \draw[dashed,->](0,-2)--(0,2);
          \draw[dashed,->](-2,0)--(2,0);      
         \fill (a) circle (1.3pt);
                \fill (b) circle (1.3pt);
                       \fill (c) circle (1.3pt);
                              \fill (d) circle (1.3pt);
                              \draw [line width=1pt] (a)--(b)--(c)--(d)--(a);
                                 \draw [line width=1pt] (b)--(d);
                           
 \end{tikzpicture}
	&	0	\\
      \hline

       \end{tabular}

       \begin{tabular}{|c|c|c|c|}

         \hline

     A set of prime numbers $S$ & Quadratic residue graph & $Z_k$\\
      \hline
  
5	,	13	,	37	,	113	&	\begin{tikzpicture}[>=Stealth,scale=0.6]
   \coordinate (a) at (1.5,0);
  \coordinate (b) at (0,1.5);
    \coordinate (c) at (-1.5,0);
  \coordinate (d) at (0,-1.5);
  \node  at (a)[anchor=225] {\small{\rm5}} ;
\node at (b)[anchor=-45] {\small{\rm13}} ;
\node at (c) [anchor=-45]{\small{\rm37}} ;
\node at (d) [anchor=45]{\small{\rm113}} ;
    \draw[dashed] (0,0) circle(1.5);
     \draw[dashed,->](0,-2)--(0,2);
          \draw[dashed,->](-2,0)--(2,0);      
         \fill (a) circle (1.3pt);
                \fill (b) circle (1.3pt);
                       \fill (c) circle (1.3pt);
                              \fill (d) circle (1.3pt);
                              \draw [line width=1pt] (a)--(b)--(c)--(d)--(a);
                               \draw [line width=1pt] (a)--(c);
 \end{tikzpicture}
	&	0	\\
\hline
13	,	17	,	29	,	97	&	\begin{tikzpicture}[>=Stealth,scale=0.6]
   \coordinate (a) at (1.5,0);
  \coordinate (b) at (0,1.5);
    \coordinate (c) at (-1.5,0);
  \coordinate (d) at (0,-1.5);
  \node  at (a)[anchor=225] {\small{\rm5}} ;
\node at (b)[anchor=-45] {\small{\rm17}} ;
\node at (c) [anchor=-45]{\small{\rm29}} ;
\node at (d) [anchor=45]{\small{\rm97}} ;
    \draw[dashed] (0,0) circle(1.5);
     \draw[dashed,->](0,-2)--(0,2);
          \draw[dashed,->](-2,0)--(2,0);      
         \fill (a) circle (1.3pt);
                \fill (b) circle (1.3pt);
                       \fill (c) circle (1.3pt);
                              \fill (d) circle (1.3pt);
                              \draw [line width=1pt] (b)--(c)--(d)--(b);
                               \draw [line width=1pt] (a)--(d);
 \end{tikzpicture}&	0	\\
     \hline     
      
5	,	17	,	41	,	53	&	\begin{tikzpicture}[>=Stealth,scale=0.6]
   \coordinate (a) at (1.5,0);
  \coordinate (b) at (0,1.5);
    \coordinate (c) at (-1.5,0);
  \coordinate (d) at (0,-1.5);
  \node  at (a)[anchor=225] {\small{\rm5}} ;
\node at (b)[anchor=-45] {\small{\rm17}} ;
\node at (c) [anchor=-45]{\small{\rm41}} ;
\node at (d) [anchor=45]{\small{\rm53}} ;
    \draw[dashed] (0,0) circle(1.5);
     \draw[dashed,->](0,-2)--(0,2);
          \draw[dashed,->](-2,0)--(2,0);      
         \fill (a) circle (1.3pt);
                \fill (b) circle (1.3pt);
                       \fill (c) circle (1.3pt);
                              \fill (d) circle (1.3pt);
                              \draw [line width=1pt] (a)--(b)--(c)--(d)--(a);
 \end{tikzpicture}&	4	\\
     \hline

      13	,	17	,	53	,	73	&	\begin{tikzpicture}[>=Stealth,scale=0.6]
   \coordinate (a) at (1.5,0);
  \coordinate (b) at (0,1.5);
    \coordinate (c) at (-1.5,0);
  \coordinate (d) at (0,-1.5);
  \node  at (a)[anchor=225] {\small{\rm5}} ;
\node at (b)[anchor=-45] {\small{\rm17}} ;
\node at (c) [anchor=-45]{\small{\rm41}} ;
\node at (d) [anchor=45]{\small{\rm53}} ;
    \draw[dashed] (0,0) circle(1.5);
     \draw[dashed,->](0,-2)--(0,2);
          \draw[dashed,->](-2,0)--(2,0);      
         \fill (a) circle (1.3pt);
                \fill (b) circle (1.3pt);
                       \fill (c) circle (1.3pt);
                              \fill (d) circle (1.3pt);
                              \draw [line width=1pt] (b)--(d)--(c);
                               \draw [line width=1pt] (a)--(d);
 \end{tikzpicture}&	0	\\
     \hline 
           5	,	17	,	37	,	113	&	\begin{tikzpicture}[>=Stealth,scale=0.6]
   \coordinate (a) at (1.5,0);
  \coordinate (b) at (0,1.5);
    \coordinate (c) at (-1.5,0);
  \coordinate (d) at (0,-1.5);
  \node  at (a)[anchor=225] {\small{\rm5}} ;
\node at (b)[anchor=-45] {\small{\rm17}} ;
\node at (c) [anchor=-45]{\small{\rm37}} ;
\node at (d) [anchor=45]{\small{\rm113}} ;
    \draw[dashed] (0,0) circle(1.5);
     \draw[dashed,->](0,-2)--(0,2);
          \draw[dashed,->](-2,0)--(2,0);      
         \fill (a) circle (1.3pt);
                \fill (b) circle (1.3pt);
                       \fill (c) circle (1.3pt);
                              \fill (d) circle (1.3pt);
                              \draw [line width=1pt] (a)--(b)--(c)--(d)--(a);
                               \draw [line width=1pt] (b)--(d);
                                 \draw [line width=1pt] (a)--(c);
 \end{tikzpicture}&	0	\\
     \hline 
                5	,	29	,	41	,	89	&	\begin{tikzpicture}[>=Stealth,scale=0.6]
   \coordinate (a) at (1.5,0);
  \coordinate (b) at (0,1.5);
    \coordinate (c) at (-1.5,0);
  \coordinate (d) at (0,-1.5);
  \node  at (a)[anchor=225] {\small{\rm5}} ;
\node at (b)[anchor=-45] {\small{\rm29}} ;
\node at (c) [anchor=-45]{\small{\rm41}} ;
\node at (d) [anchor=45]{\small{\rm89}} ;
    \draw[dashed] (0,0) circle(1.5);
     \draw[dashed,->](0,-2)--(0,2);
          \draw[dashed,->](-2,0)--(2,0);      
         \fill (a) circle (1.3pt);
                \fill (b) circle (1.3pt);
                       \fill (c) circle (1.3pt);
                              \fill (d) circle (1.3pt);
                              \draw [line width=1pt] (b)--(c)--(d)--(b);
                   
 \end{tikzpicture}&	4	\\
     \hline 
                     5	,	37	,	61	,	101	&	\begin{tikzpicture}[>=Stealth,scale=0.6]
   \coordinate (a) at (1.5,0);
  \coordinate (b) at (0,1.5);
    \coordinate (c) at (-1.5,0);
  \coordinate (d) at (0,-1.5);
  \node  at (a)[anchor=225] {\small{\rm5}} ;
\node at (b)[anchor=-45] {\small{\rm37}} ;
\node at (c) [anchor=-45]{\small{\rm61}} ;
\node at (d) [anchor=45]{\small{\rm101}} ;
    \draw[dashed] (0,0) circle(1.5);
     \draw[dashed,->](0,-2)--(0,2);
          \draw[dashed,->](-2,0)--(2,0);      
         \fill (a) circle (1.3pt);
                \fill (b) circle (1.3pt);
                       \fill (c) circle (1.3pt);
                              \fill (d) circle (1.3pt);
                              \draw [line width=1pt] (a)--(b)--(c)--(d);
                   
 \end{tikzpicture}&	0	\\
     \hline 
\end{tabular}
\end{center}

\newpage
\leavevmode\\
Yuqi Deng\\
Graduate School of Mathematics, Kyushu University\\
744, Motooka, Nishi-ku, Fukuoka, 819-0395, JAPAN\\
e-mail: deng.yuqi.608@s.kyushu-u.ac.jp
\\ 
\\
Riku Kurimaru\\
Graduate School of Mathematics, Kyushu University\\
744, Motooka, Nishi-ku, Fukuoka, 819-0395, JAPAN\\
e-mail: rikureal@icloud.com
\\ 
\\
Toshiki Matsusaka\\
Faculty of Mathematics, Kyushu University\\
744, Motooka, Nishi-ku, Fukuoka, 819-0395, JAPAN\\
e-mail: matsusaka@math.kyushu-u.ac.jp

\end{document}